\begin{document}

\def\HH{{\mathcal{H}}}
\def\orb{{\operatorname{orb}}}
\def\diam{{\operatorname{diam}}}
\def\II{{\mathfrak{I}}}
\def\PO{{\operatorname{PO}}}
\def\Cl{{\operatorname{Cl}}}
\def\Max{{\operatorname{-Max}}}
\def\XX{{\mathfrak X}}
\def\YY{{\bf{Y}}}
\def\BBB{{\mathcal B}}
\def\inv{{\operatorname{inv}}}
\def\emph{\it}
\def\Int{{\operatorname{Int}}}
\def\Spec{\operatorname{Spec}}
\def\Bin{{\operatorname{B}}}
\def\n{\operatorname{b}}
\def\N{{\operatorname{GB}}}
\def\BC{{\operatorname{BC}}}
\def\dlog{\frac{d \log}{dT}}
\def\Sym{\operatorname{Sym}}
\def\Nr{\operatorname{Nr}}
\def\lbrack{{\{}}
\def\rbrack{{\}}}
\def\burnside{\operatorname{B}}
\def\Sym{\operatorname{Sym}}
\def\Hom{\operatorname{Hom}}
\def\Inj{\operatorname{Inj}}
\def\Aut{{\operatorname{Aut}}}
\def\Mor{{\operatorname{Mor}}}
\def\Map{{\operatorname{Map}}}
\def\CMap{{\operatorname{CMap}}}
\def\GMaps{G{\operatorname{-Maps}}}
\def\Fix{{\operatorname{Fix}}}
\def\res{{\operatorname{res}}}
\def\ind{{\operatorname{ind}}}
\def\inc{{\operatorname{inc}}}
\def\coind{{\operatorname{cnd}}}
\def\Equiv{{\mathcal{E}}}
\def\W{\operatorname{W}}
\def\F{\operatorname{F}}
\def\witt{\operatorname{gh}}
\def\ngh{\operatorname{ngh}}
\def\Fm{{\operatorname{Fm}}}
\def\bij{{\iota}}
\def\mk{{\operatorname{mk}}}
\def\km{{\operatorname{mk}}}
\def\VV{{\bf{V}}}
\def\ff{{\bf{f}}}
\def\ZZ{{\mathbb Z}}
\def\Zhat{{\widehat{\mathbb Z}}}
\def\CC{{\mathbb C}}
\def\PP{{\mathbf p}}
\def\L{{\mathbf L}}
\def\DD{{\mathbb D}}
\def\EE{{\mathbb E}}
\def\MM{{\mathbb M}}
\def\JJ{{\mathbb J}}
\def\NN{{\mathbb N}}
\def\RR{{\mathbb R}}
\def\QQ{{\mathbb Q}}
\def\FF{{\mathbb F}}
\def\mm{{\mathfrak m}}
\def\nn{{\mathfrak n}}
\def\jj{{\mathfrak j}}
\def\aaa{{{{\mathfrak a}}}}
\def\bbb{{{{\mathfrak b}}}}
\def\ppp{{{{\mathfrak p}}}}
\def\qqq{{{{\mathfrak q}}}}
\def\PPP{{{{\mathfrak P}}}}
\def\BB{{\mathfrak B}}
\def\jj{{\mathfrak J}}
\def\LL{{\mathfrak L}}
\def\qq{{\mathfrak Q}}
\def\rr{{\mathfrak R}}
\def\cc{{\mathfrak S}}
\def\TT{{\mathcal{T}}}
\def\SS{{\mathcal S}}
\def\UU{{\mathcal U}}
\def\AA{{\mathcal A}}
\def\BB{{\mathcal B}}
\def\Primes{{\mathcal P}}
\def\genS{{\langle S \rangle}}
\def\genT{{\langle T \rangle}}
\def\bT{\mathsf{T}}
\def\bD{\mathsf{D}}
\def\bC{\mathsf{C}}
\def\VV{{\bf V}}
\def\ff{{\bf f}}
\def\uu{{\bf u}}
\def\aa{{\bf{a}}}
\def\bb{{\bf{b}}}
\def\zero{{\bf 0}}
\def\rad{\operatorname{rad}}
\def\End{\operatorname{End}}
\def\id{\operatorname{id}}
\def\mod{\operatorname{mod}}
\def\im{\operatorname{im}\,}
\def\ker{\operatorname{ker}}
\def\coker{\operatorname{coker}}
\def\ord{\operatorname{ord}}
\def\li{\operatorname{li}}
\def\Ei{\operatorname{Ei}}
\def\Ein{\operatorname{Ein}}
\def\Ri{\operatorname{Ri}}
\def\Rie{\operatorname{Rie}}
\def\degl{\operatorname{deglog}}

\newtheorem{theorem}{Theorem}[section]
\newtheorem*{theorem*}{Theorem}
\newtheorem{proposition}[theorem]{Proposition}
\newtheorem{corollary}[theorem]{Corollary}
\newtheorem{lemma}[theorem]{Lemma}
\newtheorem{unnumblemma}{Lemma}[section]

\theoremstyle{definition}
\newtheorem{example}[theorem]{Example}
\newtheorem{definition}[theorem]{Definition}
\newtheorem*{definition*}{Definition}
\newtheorem{remark}[theorem]{Remark}
\newtheorem{problem}[theorem]{Problem}
\newtheorem{conjecture}[theorem]{Conjecture}

 \newenvironment{map}[1]
   {$$#1:\begin{array}{rcl}}
   {\end{array}$$
   \\[-0.5\baselineskip]
 }

 \newenvironment{map*}
   {\[\begin{array}{rcl}}
   {\end{array}\]
   \\[-0.5\baselineskip]
 }

 \newenvironment{nmap*}
   {\begin{eqnarray}\begin{array}{rcl}}
   {\end{array}\end{eqnarray}
   \\[-0.5\baselineskip]
 }

 \newenvironment{nmap}[1]
   {\begin{eqnarray}#1:\begin{array}{rcl}}
   {\end{array}\end{eqnarray}
   \\[-0.5\baselineskip]
 }

\newcommand{\eq}{eq.\@\xspace}
\newcommand{\eqs}{eqs.\@\xspace}
\newcommand{\diagram}{diag.\@\xspace}

\numberwithin{equation}{section}


\title{Asymptotic expansions of weighted prime power counting functions}

\author{Jesse Elliott \\  
California State University, Channel Islands \\
{\tt jesse.elliott@csuci.edu}}

\maketitle

\begin{abstract}
We prove several asymptotic continued fraction expansions of $\pi(x)$, $\Pi(x)$, $\li(x)$, $\Ri(x)$, and related functions, where $\pi(x)$ is the prime counting function, $\Pi(x) =  \sum_{k = 1}^\infty \frac{1}{k}\pi(\sqrt[k]{x})$ is the Riemann prime counting function, and $\Ri(x) = \sum_{k=1}^\infty \frac{ \mu(k)}{k} \li(\sqrt[k]{x})$ is Riemann's approximation to the prime counting function.   We also determine asymptotic continued fraction expansions of the function $\sum_{p \leq x} p^s$ for all $s \in \CC$ with $\operatorname{Re}(s) > -1$, and of the functions $\sum_{a^x < p \leq a^{x+1}} \frac{1}{p}$ and $\log \prod_{a^x < p \leq a^{x+1}} (1 -1/p)^{-1}$ for all real numbers $a > 1$.  We also determine the first few  terms of an asymptotic continued fraction expansion of the function $\pi(ax)-\pi(bx)$ for  $a > b > 0$.  As a corollary of these results, we determine the best rational approximations of the ``linearized'' verions of these various functions.  \\

\noindent {\bf Keywords:}  prime counting function, asymptotic expansion, continued fraction. \\

\noindent {\bf MSC:}   11N05, 30B70,  44A15
\end{abstract}

\bigskip

\tableofcontents

\section{Introduction}

\subsection{Summary}

This paper concerns the asymptotic behavior of the function $\pi: \RR_{>0} \longrightarrow \RR$ that for any $x > 0$ counts the number of primes less than or equal to $x$: $$\pi(x) =   \# \{p \leq x: p \mbox{ is prime}\}, \quad x > 0.$$  The function $\pi(x)$ is known as the {\bf prime counting function}.  The celebrated {\bf prime number theorem}, proved independently by de la Vall\'ee Poussin \cite{val1} and Hadamard \cite{had}  in 1896,  states that
\begin{align*}
\pi(x) \sim \frac{x}{\log x} \ (x \to \infty),
\end{align*}
where $\log x$ is the natural logarithm.  It is well known that this is just the first term of a (divergent) asymptotic expansion of $\pi(x)$, namely, 
\begin{align*}
\frac{\pi(x)}{x} \sim \sum_{n = 1}^\infty \frac{n!}{(\log x)^n} \ (x \to \infty).
\end{align*}
As shown in \cite[Theorem 1.1]{ell}, this can be reinterpreted as the (divergent) asymptotic continued fraction expansions
$$\frac{\pi(x)}{x}  \sim \,  \cfrac{\frac{1}{\log x}}{1 \,-} \ \cfrac{\frac{1}{\log x}}{1 \,-}\  \cfrac{\frac{1}{\log x}}{1 \,-}\  \cfrac{\frac{2}{\log x}}{1 \,-}\  \cfrac{\frac{2}{\log x}}{1 \,-}\  \cfrac{\frac{3}{\log x}}{1 \,-} \  \cfrac{\frac{3}{\log x}}{1 \,-}\  \cfrac{\frac{4}{\log x}}{1 \,-}  \ \cfrac{\frac{4}{\log x}}{1 \,-}  \ \cdots \ (x \to \infty)$$
and
$$\frac{\pi(x)}{x}  \,  \sim \, \cfrac{1}{\log x -1\,-} \  \cfrac{1}{\log x  - 3 \,-}\  \cfrac{4}{\log x-5\,-}\  \cfrac{9}{\log x - 7 \,-} \ \cfrac{16}{\log x-9\,-}\   \cdots \  (x \to \infty).$$
In this paper, we prove similar asymptotic continued fraction expansions of various weighted prime power counting functions and their smooth approximations.

Specifically,  we prove several asymptotic continued fraction expansions of $\pi(x)$, $\Pi(x)$, $\li(x)$, $\Ri(x)$, and related functions, where $\pi(x)$ is the prime counting function, $\Pi(x) =  \sum_{k = 1}^\infty \frac{1}{k}\pi(\sqrt[k]{x})$ is the Riemann prime counting function, and $\Ri(x) = \sum_{k=1}^\infty \frac{ \mu(k)}{k} \li(\sqrt[k]{x})$ is Riemann's approximation to the prime counting function.   We also determine asymptotic continued fraction expansions of the function $\sum_{p \leq x} p^s$ for all $s \in \CC$ with $\operatorname{Re}(s) > -1$, and of the functions $\sum_{a^x < p \leq a^{x+1}} \frac{1}{p}$ and $\log \prod_{a^x < p \leq a^{x+1}} (1 -1/p)^{-1}$ for all real numbers $a > 1$.  We also determine the first few  terms of an asymptotic continued fraction expansion of the function $\pi(ax)-\pi(bx)$ for  $a > b > 0$.  As a corollary of these results, we determine the best rational approximations of the ``linearized'' versions of these various functions.

This paper  is a sequel to the paper  \cite{ell}, and the definitions and results therein will be assumed here.   Thus, for example, we require the notion of an  {\it asymptotic expansion}, and that of an {\it asymptotic continued fraction expansion}, over some unbounded subset $\XX$ of $\CC$.   We also require the notions of a {\it Jacobi continued fraction},  a {\it Stieltjes continued fraction}, and a {\it best rational function approximation} of a function.

The paper \cite{ell} focuses on {\it divergent} asymptotic continued fraction expansions.  This paper deals also with {\it convergent} asymptotic continued fraction expansions.  In Section 1.2, we make a few general observations about such expansions.  In Section 2, we prove various asymptotic expansions of weighted prime power counting functions {\it relative to each other}.  Some of these asymptotic expansions are easily verified (e.g., $\Pi(x) \sim \sum_{n = 1}^\infty \frac{1}{n}\pi(\sqrt[n]{x})  \ (x \to \infty)$), but others, especially Propositions \ref{RAprop} and \ref{RApropa}, are undoubtedly worth making explicit.  Finally, in Section 3, we apply the results of Section 2 and of the paper  \cite{ell} to prove several asymptotic continued fraction expansions of various weighted prime power counting functions and their smooth approximations.

\subsection{Asymptotic continued fraction expansions}

The following result is an immediate corollary of \cite[Theorems 2.4 and 2.9]{ell}.

\begin{proposition}\label{aprop}
Let $f(z)$ be a complex-valued function defined on some unbounded subset $\XX$ of $\CC$, and let $\mu$ be a measure on $\RR$ with infinite support and finite moments $\mu_k = m_k(\mu) = \int_{-\infty}^\infty t^k d\mu \in \RR$.  Then the following conditions are equivalent.
\begin{enumerate}
\item One has the asymptotic expansion $$f(z) \sim \sum_{k = 0}^\infty \frac{\mu_k}{z^{k+1}} \ (z \to \infty)_\XX$$ of $f(z)$ over $\XX$.
\item $f(z)$ has an  asymptotic Jacobi continued fraction expansion
$$f(z) \, \sim \,\cfrac{a_1}{z+b_1 \,-} \  \cfrac{a_2}{z+b_2 \,-} \  \cfrac{a_3}{z+b_3 \,-} \  \cdots  \ (z \to \infty)_\XX$$
such that the $n$th approximant $w_n(z)$ of the continued fraction for all $n \geq 1$  has the asymptotic expansion
$$w_n(z) \sim  \sum_{k = 0}^{2n-1} \frac{\mu_k}{z^{k+1}}  \ (z \to \infty)$$ 
of order $2n$ at $z = \infty$, where $a_n, b_n \in \RR$ and $a_n > 0$ for all $n$.
\end{enumerate} 
If the conditions above hold, then $f(z)$ and the sequences $\{a_n\}$ and $\{b_n\}$ satisfy the equivalent conditions (2)(a)--(e) of \cite[Theorem 2.4]{ell}, and so, for example, the $w_n(z)$ are precisely the best rational approximations of $f(z)$ over $\XX$.
\end{proposition}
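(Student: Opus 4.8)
The plan is to deduce Proposition~\ref{aprop} directly from \cite[Theorems 2.4 and 2.9]{ell}, which together characterize when a formal power series $\sum_{k\geq 0}\mu_k/z^{k+1}$ is the asymptotic expansion of a function $f$ over $\XX$ and when such a series arises from a Jacobi continued fraction whose approximants realize the prescribed Padé-type conditions. The key observation is that the moment sequence $\{\mu_k\}$ coming from a measure $\mu$ with infinite support is a \emph{positive definite} sequence, i.e.\ all the associated Hankel determinants $\Delta_n = \det(\mu_{i+j})_{0\leq i,j\leq n}$ are strictly positive; this is the classical solvability criterion for the Hamburger moment problem, and it is equivalent to the $J$-fraction having coefficients $a_n>0$, $b_n\in\RR$ for all $n$. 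So the real content is matching up the hypotheses: from a measure one extracts a positive definite moment sequence, which by the quoted theorems yields a $J$-fraction of exactly the stated form, and conversely.

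First I would recall the precise statement of \cite[Theorem 2.9]{ell}: a formal series $\sum_k \mu_k/z^{k+1}$ has an associated Jacobi continued fraction $\cfrac{a_1}{z+b_1-}\cfrac{a_2}{z+b_2-}\cdots$ with $a_n,b_n\in\RR$ and $a_n>0$ for all $n$ if and only if the sequence $\{\mu_k\}$ is positive definite, equivalently, if and only if $\{\mu_k\}$ is the moment sequence of a measure on $\RR$ with infinite support; moreover in that case the $n$th approximant $w_n(z)$ agrees with $\sum_{k=0}^{2n-1}\mu_k/z^{k+1}$ to order $2n$ at infinity. Next I would invoke \cite[Theorem 2.4]{ell}, which says that for a function $f$ on $\XX$, having the asymptotic expansion $f(z)\sim\sum_k\mu_k/z^{k+1}$ over $\XX$ is equivalent to $f$ possessing an asymptotic $J$-fraction expansion over $\XX$ whose approximants $w_n$ are the rational functions determined by the (formal) $J$-fraction, and that these $w_n$ are then the best rational approximations of $f$ over $\XX$; this is where the list of equivalent conditions (2)(a)--(e) enters.

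With these two theorems in hand the proof is essentially bookkeeping. For the implication (1)$\Rightarrow$(2): if $f(z)\sim\sum_k\mu_k/z^{k+1}$ over $\XX$, then since $\{\mu_k\}$ is the moment sequence of a measure with infinite support it is positive definite, so by \cite[Theorem 2.9]{ell} the formal series is the expansion of a $J$-fraction with $a_n>0$, $b_n\in\RR$, whose $n$th approximant matches $\sum_{k=0}^{2n-1}\mu_k/z^{k+1}$ to order $2n$; combining with \cite[Theorem 2.4]{ell} gives that $f$ has this $J$-fraction as an asymptotic expansion over $\XX$ with the stated approximant property, which is (2). The converse (2)$\Rightarrow$(1) is the easy direction: the $n$th approximant of the $J$-fraction is a rational function of degree pattern forcing $w_n(z)=\sum_{k=0}^{2n-1}\mu_k/z^{k+1}+O(z^{-2n-1})$, and the hypothesis that $f(z)-w_n(z)=o(z^{-2n})$ over $\XX$ for every $n$ then forces $f(z)\sim\sum_k\mu_k/z^{k+1}$ over $\XX$, again by \cite[Theorem 2.4]{ell} or directly. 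Finally, the last sentence of the proposition is immediate once (1) and (2) hold, since \cite[Theorem 2.4]{ell} asserts the equivalence with conditions (2)(a)--(e) and the best-approximation property of the $w_n$.

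The main obstacle, such as it is, is purely expository: one must state \cite[Theorems 2.4 and 2.9]{ell} accurately enough to see that the only gap they leave is the identification ``moment sequence of a measure with infinite support $\Longleftrightarrow$ positive definite sequence $\Longleftrightarrow$ $J$-fraction with $a_n>0$,'' which is the classical Hamburger moment problem criterion and is presumably already recorded in \cite{ell}. If that equivalence is \emph{not} already available in \cite{ell} in the exact form needed (for instance if \cite{ell} phrases things in terms of Hankel determinants rather than measures), then a short lemma recalling it would need to be inserted; this is standard (see e.g.\ Akhiezer or Shohat--Tamarkin) but would be the one place where a genuine argument, rather than a citation, is required.
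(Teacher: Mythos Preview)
Your proposal is correct and matches the paper's approach exactly: the paper simply states that the proposition ``is an immediate corollary of \cite[Theorems 2.4 and 2.9]{ell}'' without further argument, and your write-up merely unpacks how those two theorems combine (via positive definiteness of the moment sequence of a measure with infinite support) to give both implications and the final sentence. There is nothing to add or correct.
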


The hypotheses on $f(z)$ of the proposition can be achieved, at least over any subset $\XX$ of $\CC_{\delta, \varepsilon} =  \{z \in \CC : \delta \leq |\operatorname{Arg}(z)| \leq \pi- \varepsilon\}$, for any fixed $\delta, \varepsilon > 0$, if one has
$$f(z) = {\mathcal S}_\mu(z) + O\left(\frac{1}{z^k} \right) \ (z \to \infty)_\XX$$
for all integers $k$,  where ${\mathcal S}_\mu(z) = \int_{-\infty}^\infty \frac{d \mu(t)}{z-t}$ denotes the {\bf Stieltjes transform} of $\mu$.  However, in cases relevant to various prime counting functions one seeks an asymptotic expansion of some function $f(x)$ over $\RR_{> 0}$, not over $\CC_{\delta, \varepsilon}$.  In these cases, one would have to verify the asymptotic expansion $f(x) \sim \sum_{k = 0}^\infty \frac{\mu_k}{x^{k+1}} \ (x \to \infty)$ over $\RR_{>0}$ through some other means.  As discussed in \cite{ell}, this  exact situation occurs, for example, with the function $f(x) = \frac{\pi(e^x)}{e^x}$, since the asymptotic expansion $\frac{\pi(e^x)}{e^x} \sim \sum_{k = 0}^\infty \frac{\mu_k(\gamma_0)}{x^{k+1}} \ (x \to \infty)$ follows from the prime number theorem with error term, where $\gamma_0$ is the exponential distribution with weight parameter $1$ supported on $[0,\infty)$.

If, however, $\mu$ a finite measure on $\RR$ with infinite and compact support, then $\mu$ has finite moments, and the asymptotic continued  fraction expansion of ${\mathcal S}_\mu(z)$ in  holds over $\CC$, not just over $\CC_{\delta, \varepsilon} = \{z \in \CC : \delta \leq |\operatorname{Arg}(z)| \leq \pi- \varepsilon\}$, that is, one has
$${\mathcal S}_\mu(z) \, \sim \, \cfrac{a_1}{z+b_1 \,-} \  \cfrac{a_2}{z+b_2 \,-} \  \cfrac{a_3}{z+b_3 \,-} \  \cdots  \ (z \to \infty).$$
In this case, a function $f(z)$ has the asymptotic Jacobi continued fraction expansion 
$$f(z) \, \sim \, \cfrac{a_1}{z+b_1 \,-} \  \cfrac{a_2}{z+b_2 \,-} \  \cfrac{a_3}{z+b_3 \,-} \  \cdots  \ (z \to \infty)_\XX$$ over some unbounded subset $\XX$ of $\CC$  if and only if
$$f(z) = {\mathcal S}_\mu(z) + O\left(\frac{1}{z^k} \right) \ (z \to \infty)_\XX$$
for all integers $k$.  Also in this case, ${\mathcal S}_\mu(z)$ is analytic at $\infty$ with Laurent expansion
$${\mathcal S}_\mu(z) = \sum_{k = 0}^\infty \frac{\mu_k}{z^{k+1}}, \quad |z| \gg 0$$
 and  Stieltjes continued fraction expansion
$${\mathcal S}_\mu(z) = \cfrac{a_1}{z+b_1 \,-} \  \cfrac{a_2}{z+b_2 \,-} \  \cfrac{a_3}{z+b_3 \,-} \ \cdots, \quad z \in \CC\backslash \RR \text{ or } |z| \gg 0.$$

A proposition analogous to Proposition \ref{aprop}, along with similar comments, hold for  Stieltjes continued fractions, as a consequence of \cite[Theorem 2.6 and 2.8]{ell}.

\begin{example}
For a simple example that will be relevant in Section 3.2, consider the uniform distribution $\mu$ on $[-1,0]$.  This measure has Stieltjes transform 
\begin{align}\label{logas1}
\log\left(1+\frac{1}{z}\right) = \cfrac{\frac{1}{z}}{1 \,+}\  \cfrac{\frac{1}{z}}{2 \,+}\  \cfrac{\frac{1}{z}}{3 \,+}\   \cfrac{\frac{4}{z}}{4 \,+}\  \cfrac{\frac{4}{z}}{5 \,+} \  \cfrac{\frac{9}{z}}{6 \,+} \  \cfrac{\frac{9}{z}}{7 \,+} \  \cfrac{\frac{16}{z}}{8 \,+} \  \cfrac{\frac{16}{z}}{9 \,+} \  \cdots, \quad z \in \CC\backslash [-1,0],
\end{align}
and moments $$m_n(\mu) = \int_{-1}^0 t^n \, dt = \frac{(-1)^n}{n+1},$$
which yields the asymptotic expansion
$$\log\left(1+\frac{1}{z}\right)  \sim  \sum_{n = 1}^\infty \frac{(-1)^{n-1}}{nz^{n}} \ (z \to \infty),$$
which of course is also valid as an exact Laurent expansion for $|z| > 1$ (where $\log$ is the principal branch of the logarithm).  Consequently, one also has the asymptotic expansion
\begin{align}\label{logas}
\log\left(1+\frac{1}{z}\right) \sim \cfrac{\frac{1}{z}}{1 \,+}\  \cfrac{\frac{1}{z}}{2 \,+}\  \cfrac{\frac{1}{z}}{3 \,+}\   \cfrac{\frac{4}{z}}{4 \,+}\  \cfrac{\frac{4}{z}}{5 \,+} \  \cfrac{\frac{9}{z}}{6 \,+} \  \cfrac{\frac{9}{z}}{7 \,+} \  \cfrac{\frac{16}{z}}{8 \,+} \ \cdots \  (z \to \infty).
\end{align}
The expansion  (\ref{logas1}) is well-known and is re-expressed in the  form
\begin{align*}
\log (1+z) = \cfrac{z}{1 \,+}\  \cfrac{z}{2 \,+}\  \cfrac{z}{3 \,+}\   \cfrac{4z}{4 \,+}\  \cfrac{4z}{5 \,+} \  \cfrac{9z}{6 \,+} \  \cfrac{9z}{7 \,+} \  \cfrac{16z}{8 \,+} \  \cfrac{16z}{9 \,+}\ \cdots, \quad z \in \CC\backslash (-\infty,-1]
\end{align*}
via the transformation $z \longmapsto 1/z$.
\end{example}

Further examples, as they relate to the prime counting function, are provided in Section 3.

\section{Relative asymptotic expansions}

\subsection{The Riemann prime counting function}

The {\bf Riemann prime counting function} is given by
$$\Pi(x) = \sum_{n = 1}^\infty \frac{1}{n}\pi(\sqrt[n]{x}) = \sum_{n = 1}^\infty \sum_{p^n \leq x} \frac{1}{n}, \quad x >0.$$
It is a weighted prime power counting function, where each power $p^n > 1$ of a prime $p$ is weighted by $\frac{1}{n}$.   Since $\pi(x) = \Pi(x) = 0$ if $x < 2$, and $\sqrt[n]{x} < 2$ if $n > \log_2 x$, one has
\begin{align}\label{Pi2}
\Pi(x) = \sum_{n \leq \log_2 x} \frac{1}{n}\pi(\sqrt[n]{x}), \quad x > 0.
\end{align}

\begin{proposition}\label{RAprop2}
One has the asymptotic expansion
\begin{align*}
\Pi(x) \sim \sum_{n = 1}^\infty \frac{1}{n}\pi(\sqrt[n]{x})  \ (x \to \infty).
\end{align*}
\end{proposition}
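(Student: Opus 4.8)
The plan is to verify the statement straight from the definition of an asymptotic expansion: I will check that the terms $\phi_n(x) := \frac{1}{n}\pi(\sqrt[n]{x})$ form an asymptotic sequence as $x \to \infty$, and that for every fixed $N$ the difference $\Pi(x) - \sum_{n=1}^N \phi_n(x)$ is $o(\phi_N(x))$. This is exactly the ``easily verified'' situation alluded to in the introduction, and it reduces entirely to crude size estimates for $\pi$, because by \eqref{Pi2} the series $\sum_{n\geq 1}\phi_n(x)$ that defines $\Pi(x)$ is in fact a finite sum with only $O(\log x)$ nonzero terms (so its convergence to $\Pi(x)$ is trivial and the only content is the rate).

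First I would record that, by the prime number theorem, $\pi(\sqrt[n]{x}) \sim \frac{n\sqrt[n]{x}}{\log x}$ as $x \to \infty$ for each fixed $n \geq 1$, hence $\phi_n(x) \sim \frac{\sqrt[n]{x}}{\log x}$. Since $\sqrt[n+1]{x}/\sqrt[n]{x} = x^{-1/(n(n+1))} \to 0$, it follows that $\phi_{n+1}(x) = o(\phi_n(x))$, so $\{\phi_n(x)\}_{n \geq 1}$ is an asymptotic sequence as $x \to \infty$. Only the Chebyshev-type bounds $\pi(y) \asymp y/\log y$ are actually needed here.

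Next, fixing $N \geq 1$ and taking $x$ large enough that $\lfloor \log_2 x\rfloor > N$, equation \eqref{Pi2} gives the exact identity $\Pi(x) - \sum_{n=1}^N \phi_n(x) = \sum_{n=N+1}^{\lfloor \log_2 x\rfloor} \frac{1}{n}\pi(\sqrt[n]{x})$. I would bound the right-hand side with the trivial inequality $\pi(y) \leq y$ and $\sqrt[n]{x} \leq \sqrt[N+1]{x}$ for $n \geq N+1$, obtaining a quantity $\leq \frac{\log_2 x}{N+1}\,x^{1/(N+1)}$. Since $x^{1/N - 1/(N+1)} = x^{1/(N(N+1))}$ outgrows every power of $\log x$, this is $o\!\left(x^{1/N}/\log x\right) = o(\phi_N(x))$, which is precisely the asymptotic-expansion condition. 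Isolating instead the leading omitted term $\phi_{N+1}(x)$ and applying the same estimate with $N+2$ in place of $N+1$ shows, more precisely, that the remainder is asymptotic to $\phi_{N+1}(x)$.

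I do not anticipate a genuine obstacle. The only point needing a little care is the asymptotic-sequence property of the $\phi_n$, which requires a lower bound of the form $\pi(y) \gg y/\log y$: this is available from the prime number theorem (or, more cheaply, from Chebyshev's estimates), and without it the statement would not even be well-posed as an asymptotic expansion.
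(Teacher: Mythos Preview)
Your proposal is correct and takes essentially the same approach as the paper: both use the finite-sum identity \eqref{Pi2} and bound the tail $\sum_{N<n\leq \log_2 x}\frac{1}{n}\pi(x^{1/n})$ by (a constant times) $\log_2 x$ copies of its largest term, then compare with $\phi_N(x)$ using a lower bound on $\pi$. The only cosmetic differences are that the paper keeps $\pi(x^{1/(N+1)})$ in the upper bound rather than replacing it by $x^{1/(N+1)}$, and it packages the argument as a sandwich showing directly that the remainder after $N-1$ terms is $\sim \phi_N(x)$, whereas you first verify the $o(\phi_N)$ form and then remark on the sharper $\sim \phi_{N+1}$ statement.
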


\begin{proof}  By (\ref{Pi2}), for any positive integer $N$, one has $$\frac{1}{N}\pi(x^{1/N}) \leq  \Pi(x)-\sum_{k = 1}^{N-1} \frac{1}{n}\pi(x^{1/n}) \leq \frac{1}{N}\pi(x^{1/N}) + \frac{1}{N+1}(\log_2 x) \pi(x^{1/(N+1)})$$
for all $x > 2^N$, and therefore
$$1 \leq \frac{ \Pi(x)-\sum_{n = 1}^{N-1} \frac{1}{n}\pi(x^{1/n})}{\frac{1}{N}\pi(x^{1/N})} \leq 1  + \frac{\frac{1}{N+1}(\log_2 x )\pi(x^{1/(N+1)})}{\frac{1}{N}\pi(x^{1/N})} \to 1$$
as $x \to \infty$.  It follows that
$$\lim_{x \to \infty} \frac{ \Pi(x)-\sum_{n = 1}^{N-1} \frac{1}{n}\pi(x^{1/n})}{\frac{1}{N}\pi(x^{1/N})} = 1.$$
The proposition follows.
\end{proof}

\begin{corollary}
One has
\begin{align*} \Pi(x)-\pi(x)  =  \sum_{n = 2}^\infty \sum_{p^n \leq x} \frac{1}{n} \sim \frac{1}{2}\pi(\sqrt{x}) \sim \frac{\sqrt{x}}{\log x} \ (x \to \infty)
\end{align*}
and
\begin{align*} \Pi(x)-\pi(x) -\frac{1}{2}\pi(\sqrt{x}) \sim \frac{1}{3}\pi(\sqrt[3]{x}) \sim \frac{\sqrt[3]{x}}{\log x} \ (x \to \infty).
\end{align*}
\end{corollary}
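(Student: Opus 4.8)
The plan is to obtain the corollary as the $N=2$ and $N=3$ instances of Proposition~\ref{RAprop2}, together with two applications of the prime number theorem. The opening identity $\Pi(x)-\pi(x) = \sum_{n=2}^\infty \sum_{p^n \leq x} \frac{1}{n}$ is immediate from the definition of $\Pi(x)$, since the $n=1$ term of $\sum_{n=1}^\infty \sum_{p^n \leq x}\frac{1}{n}$ is $\sum_{p \leq x} 1 = \pi(x)$; so the real content lies in the two displayed asymptotic relations.

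The proof of Proposition~\ref{RAprop2} establishes, for every positive integer $N$, the limit
$$\lim_{x \to \infty} \frac{\Pi(x) - \sum_{n=1}^{N-1}\frac{1}{n}\pi(x^{1/n})}{\frac{1}{N}\pi(x^{1/N})} = 1,$$
that is, $\Pi(x) - \sum_{n=1}^{N-1}\frac{1}{n}\pi(x^{1/n}) \sim \frac{1}{N}\pi(x^{1/N}) \ (x \to \infty)$. Specializing to $N=2$ gives $\Pi(x) - \pi(x) \sim \frac{1}{2}\pi(\sqrt{x})$, and specializing to $N=3$ gives $\Pi(x) - \pi(x) - \frac{1}{2}\pi(\sqrt{x}) \sim \frac{1}{3}\pi(\sqrt[3]{x})$, which are the left-hand asymptotic relations in the two displays.

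It remains to rewrite $\frac{1}{2}\pi(\sqrt{x})$ and $\frac{1}{3}\pi(\sqrt[3]{x})$ in the stated forms. By the prime number theorem $\pi(y) \sim \frac{y}{\log y} \ (y \to \infty)$; setting $y = x^{1/n}$, so that $\log y = \frac{1}{n}\log x$, gives $\frac{1}{n}\pi(x^{1/n}) \sim \frac{1}{n}\cdot\frac{x^{1/n}}{(1/n)\log x} = \frac{x^{1/n}}{\log x} \ (x \to \infty)$. Taking $n=2$ and $n=3$ and using transitivity of the relation $\sim$ completes the proof. There is essentially no obstacle here: all the substantive work is contained in Proposition~\ref{RAprop2}, and the only point requiring the slightest care is the routine substitution $y = x^{1/n}$ in the prime number theorem.
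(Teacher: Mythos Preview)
Your proof is correct and is exactly the intended argument: the paper states this corollary without proof immediately after Proposition~\ref{RAprop2}, since specializing the asymptotic expansion there to $N=2$ and $N=3$ and invoking the prime number theorem (via the substitution $y=x^{1/n}$) gives the result.
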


As is well known, by M\"obius inversion one has
$$\pi(x) = \sum_{n \leq \log_2 x} \frac{ \mu(n)}{n} \Pi(\sqrt[n]{x}) = \sum_{n=1}^\infty \frac{ \mu(n)}{n} \Pi(\sqrt[n]{x}), \quad x > 0.$$
A proof similar to that of Proposition \ref{RAprop2} yields the following.

\begin{proposition}\label{RAprop3}  One has the asymptotic expansion
\begin{align*}
\pi(x) \sim \sum_{n = 1}^\infty \frac{\mu(n)}{n}\Pi(\sqrt[n]{x})  \ (x \to \infty).
\end{align*}
\end{proposition}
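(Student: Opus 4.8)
The plan is to mimic the proof of Proposition \ref{RAprop2}, with the one genuine complication being that the Möbius coefficients $\mu(n)/n$ are not sign-definite, so I cannot sandwich the tail by a single dominant term directly. First I would fix a positive integer $N$ and split the sum $\pi(x) = \sum_{n=1}^\infty \frac{\mu(n)}{n}\Pi(\sqrt[n]{x})$ at $n = N$, so that the claimed asymptotic amounts to showing
$$\pi(x) - \sum_{n=1}^{N-1} \frac{\mu(n)}{n}\Pi(\sqrt[n]{x}) \sim \frac{\mu(N)}{N}\Pi(\sqrt[N]{x}) \ (x \to \infty),$$
whenever $\mu(N) \neq 0$ (for $N$ with $\mu(N) = 0$ the assertion is simply that the partial sum through $n = N-1$ agrees with the one through $n = N$, which is immediate, and one passes to the next nonzero term). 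Using (\ref{Pi2}), the full sum is finite, running only over $n \leq \log_2 x$, so there are no convergence issues; the task is purely to control the tail $\sum_{n = N}^{\log_2 x} \frac{\mu(n)}{n}\Pi(\sqrt[n]{x})$ after removing its leading term.

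The key estimate I would establish is a crude but uniform bound on $\Pi(\sqrt[n]{x})$ for $n$ in the tail range. From the Corollary (or directly from $\Pi(y) \sim \pi(y) \sim y/\log y$) one has $\Pi(\sqrt[n]{x}) = O(x^{1/n})$, and more usefully $\Pi(\sqrt[n]{x}) \leq \Pi(x^{1/N})$ for all $n \geq N$ once $x$ is large (since $\Pi$ is nondecreasing and $x^{1/n} \leq x^{1/N}$). Hence
$$\left| \sum_{n = N+1}^{\log_2 x} \frac{\mu(n)}{n}\Pi(\sqrt[n]{x}) \right| \leq (\log_2 x)\, \Pi(x^{1/(N+1)}),$$
and since $\Pi(x^{1/(N+1)}) \sim \frac{(N+1)\, x^{1/(N+1)}}{\log x}$ while $\frac{\mu(N)}{N}\Pi(x^{1/N})$ has order $x^{1/N}/\log x$, the ratio
$$\frac{(\log_2 x)\,\Pi(x^{1/(N+1)})}{\left|\frac{\mu(N)}{N}\right|\Pi(x^{1/N})} = O\!\left( (\log x)\, x^{1/(N+1) - 1/N}\right) \longrightarrow 0 \ (x \to \infty),$$
because the exponent $1/(N+1) - 1/N < 0$ dominates the logarithmic factor. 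Therefore the tail beyond the $N$th term is negligible compared to the $N$th term, which gives
$$\frac{\pi(x) - \sum_{n=1}^{N-1}\frac{\mu(n)}{n}\Pi(\sqrt[n]{x})}{\frac{\mu(N)}{N}\Pi(\sqrt[N]{x})} \longrightarrow 1 \ (x \to \infty),$$
exactly as required for the asymptotic expansion.

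The main obstacle, such as it is, is the bookkeeping around the indices $n$ with $\mu(n) = 0$: the definition of an asymptotic expansion $\pi(x) \sim \sum_{n=1}^\infty c_n$ requires that, for each $n$, the $n$th tail $\pi(x) - \sum_{k=1}^{n-1} c_k$ be asymptotic to $c_n$ when $c_n \neq 0$, and be $o$ of the first subsequent nonzero term otherwise; the argument above handles both cases uniformly once one observes that dropping a finite block of zero terms changes nothing and that the surviving leading nonzero term still has order $x^{1/M}/\log x$ for the appropriate $M$, which still beats $(\log x)\,\Pi(x^{1/(M+1)})$. Everything else is the same sandwich argument as in Proposition \ref{RAprop2}, with absolute values inserted to accommodate the sign of $\mu$, and the proposition follows.
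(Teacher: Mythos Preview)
Your proposal is correct and is precisely the argument the paper has in mind: it says only that ``a proof similar to that of Proposition \ref{RAprop2} yields'' the result, and your write-up supplies exactly that, including the one extra wrinkle (absolute values to handle the sign of $\mu(n)$ and the bookkeeping for indices with $\mu(N)=0$) that the sandwich argument of Proposition \ref{RAprop2} does not immediately cover.
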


\subsection{Riemann's approximation to the prime counting function}

The logarithmic integral function $\li(x) = \int_0^x \frac{dt}{\log t}$ can be extended to a complex function by setting
$$\li(z) = \Ei(\log z),$$
where 
$$\Ei(z) = \gamma+ \log z - \Ein(-z) = \gamma + \log z + \sum_{k =1}^\infty \frac{z^k}{k\cdot k!}$$
and where
$$ \Ein(z) =\int _{0}^{z}(1-e^{-t}){\frac {dt}{t}}=\sum_{k=1}^{\infty }{\frac {(-1)^{k+1}z^{k}}{k \cdot k!}}$$
is entire.   Let
$$\pi_0(x) = \lim_{\epsilon \to 0} \frac{\pi(x+\epsilon)+ \pi(x-\epsilon)}{2}$$
and
$$\Pi_0(x) = \lim_{\epsilon \to 0} \frac{\Pi(x+\epsilon)+\Pi(x-\epsilon)}{2} = \sum_{n = 1}^\infty \frac{1}{n}\pi_0(x^{1/n}).$$
By M\"obius inversion one has
$$\pi(x)=\sum_{n=1}^\infty \frac{\mu(n)}{n}\Pi(x^{1/n}),$$
and likewise for $\pi_0(x)$.  {\bf  Riemann's explicit formula} for $\Pi_0$ states that
$$\Pi_0(x) = \li(x) - \sum_\rho \li(x^\rho) - \log 2, \quad x > 1,$$
where the sum runs over all of the zeros $\rho$ of the Riemann zeta function $\zeta(s)$ (the nontrivial zeros taken in conjugate pairs in order of increasing imaginary part and repeated to multiplicity).   {\bf  Riemann's explicit formula} for $\pi_0$ states that
$$\pi_0(x) = \Ri(x) - \sum_\rho \Ri(x^\rho), \quad x > 1,$$
where $\Ri(x)$ is Riemann's function
$$\Ri(x)=\sum_{n=1}^\infty \frac{ \mu(n)}{n} \li(x^{1/n}), \quad x > 0.$$  
It follows that the function $\li(x)$ is properly considered an approximation for $\Pi(x)$, while Riemann's function $\Ri(x)$ is the analogous approximation for $\pi(x)$.  





It is well known that $\li(x)$ has the series representation
\begin{align}\label{liseries}
\li(x) = \gamma + \log \log x + \sum_{k = 1}^\infty \frac{(\log x)^k}{k \cdot k!}, \  \ x> 1,
\end{align}
Similarly,  $\Ri(x)$ has the series representation
$$\Ri(x) = 1+ \sum_{k = 1}^\infty \frac{(\log x)^k}{k \cdot k!\, \zeta(k+1)},  \quad x > 1,$$
which is the well-known {\bf Gram series} representation of $\Ri(x)$.  


Let \begin{align*}
R(x) = \sum_{n \leq \log x} \frac{\mu(n)}{n} \li (x^{1/n}), \quad x > 1,
\end{align*}
so that
\begin{align}\label{RE}
R(e^x) = \sum_{n \leq x} \frac{\mu(n)}{n} \li (e^{x/n}), \quad x > 0.
\end{align}

\begin{lemma}[\cite{GH}]
One has
$$\Ri(x) = R(x) + O( (\log \log x)^2) \ (x \to \infty), \quad x > e.$$
\end{lemma}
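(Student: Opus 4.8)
The plan is to estimate the tail directly. By definition $\Ri(x) = \sum_{n=1}^\infty \frac{\mu(n)}{n}\li(x^{1/n})$, and since $R(x)$ is the partial sum of this series over $n \le \log x$, one has
$$\Ri(x) - R(x) = \sum_{n > \log x} \frac{\mu(n)}{n}\,\li\!\left(x^{1/n}\right).$$
The key observation is that when $n > \log x$ one has $1 < x^{1/n} < x^{1/\log x} = e$, so the series representation (\ref{liseries}) applies with $\log x^{1/n} = (\log x)/n =: t_n \in (0,1)$, giving
$$\li\!\left(x^{1/n}\right) = \gamma + \log t_n + \sum_{k=1}^\infty \frac{t_n^k}{k \cdot k!} = \gamma + \log\log x - \log n + g(t_n), \qquad g(t) := \sum_{k=1}^\infty \frac{t^k}{k\cdot k!}.$$
Substituting this into the tail and splitting, it becomes
$$(\gamma + \log\log x)\sum_{n > \log x}\frac{\mu(n)}{n} \;-\; \sum_{n > \log x}\frac{\mu(n)\log n}{n} \;+\; \sum_{n > \log x}\frac{\mu(n)}{n}\,g(t_n),$$
a rearrangement I would justify by checking that each of the three series converges.

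For the third sum, I would use that $g(t)/t = \sum_{k\ge 1} t^{k-1}/(k\cdot k!)$ is nondecreasing on $(0,1]$, whence $g(t_n) \le g(1)\,t_n = g(1)(\log x)/n$; then $\left|\sum_{n>\log x}\frac{\mu(n)}{n}g(t_n)\right| \le g(1)(\log x)\sum_{n > \log x} n^{-2} = O(1)$, and in particular this series converges absolutely. For the first sum, I would invoke the classical facts that $\sum_{n=1}^\infty \mu(n)/n = 0$ and $\left|\sum_{n \le y}\mu(n)/n\right| \le 1$ for all $y \ge 1$; then $\left|\sum_{n > \log x}\mu(n)/n\right| = \left|\sum_{n\le\log x}\mu(n)/n\right|\le 1$, so the first sum contributes $O(\log\log x)$.

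The real work is the middle sum. Here I would first record, by Abel summation from the trivial bound $\left|\sum_{n\le t}\mu(n)\right| \le t$, the elementary estimate $\sum_{n \le y}\frac{\mu(n)\log n}{n} = O\big((\log y)^2\big)$; combined with the classical convergence of $\sum_{n=1}^\infty \frac{\mu(n)\log n}{n}$ (to $-1$), this yields $\sum_{n>\log x}\frac{\mu(n)\log n}{n} = O\big((\log\log x)^2\big)$. Adding the three estimates gives $\Ri(x) - R(x) = O\big((\log\log x)^2\big)$. The hard part will be exactly this middle term: the crude bound $|\li(x^{1/n})| = O(\log n)$ summed against $1/n$ over $n > \log x$ diverges, so the cancellation in $\mu(n)$ must be exploited essentially, and it is precisely here that analytic input beyond elementary estimates — the convergence of $\sum_n \mu(n)(\log n)/n$, which is equivalent to the prime number theorem — enters and is responsible for the quadratic power $(\log\log x)^2$ in the error term.
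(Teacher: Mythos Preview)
Your proof is correct and follows essentially the same route as the paper's: both isolate the tail $\sum_{n>\log x}$, expand $\li(x^{1/n})$ via (\ref{liseries}), bound the remainder term $g(t_n)$ by $O(1)$ through $\sum_{n>\log x}(\log x)/n^2$, and handle the $(\gamma+\log\log x-\log n)$-piece using the convergence of $\sum_n\mu(n)/n$ and $\sum_n\mu(n)(\log n)/n$ together with the trivial $O((\log\log x)^2)$ bound on their partial sums over $n\le\log x$. (Your Abel-summation step is harmless but unnecessary: the crude bound $|\mu(n)|\le 1$ already gives $\sum_{n\le y}(\log n)/n = O((\log y)^2)$ directly.)
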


\begin{proof}
The series representation (\ref{liseries}) for $\li(x)$
implies that
$$\li(t) =  \gamma + \log \log t  + O( \log t), \quad 1 < t < e,$$
hence also
$$\li(x^{1/n}) =  \gamma + \log \log x  - \log n + O\left(\frac{ \log x}{n}\right) \ (x \to \infty), \quad n > \log x,$$
where the implicit constant does not depend on $n$.  Therefore, using also the facts that  $\sum_{n = N}\frac{1}{n^2} \sim \frac{1}{N} \ (N \to \infty)$,  $\sum_{n = 1}^N \frac{1}{n} \sim \log N  \ (N \to \infty)$, $\sum_{n = 1}^\infty \frac{\mu(n)}{n}  = 0$, and $\sum_{n = 1}^\infty \frac{\mu(n)\log n}{n} = -1$, for  $x > e$ we have
\begin{align*}
\sum_{n > \log x} \frac{\mu(n)}{n} \li (x^{1/n}) &  = \sum_{n > \log x} \frac{\mu(n)}{n} (\gamma + \log \log x - \log n) +  O \left(  \sum_{n > \log x} \frac{ \log x}{n^2} \right) \\
 &  = \sum_{n > \log x} \frac{\mu(n)}{n} (\gamma + \log \log x - \log n) + O(1) \\
& = \sum_{n = 1}^\infty \frac{\mu(n)}{n} (\gamma + \log \log x - \log n) + O((\log \log x)^2) \\
& =  0 + 0 - (-1) + O((\log \log x)^2) \\
& =  O((\log \log x)^2) \ (x \to \infty).
\end{align*}
The lemma follows.
\end{proof}

\begin{proposition}\label{RAprop}
One has the asymptotic expansion
\begin{align*}
\Ri(x) \sim \sum_{n = 1}^\infty \frac{\mu(n)}{n} \li (x^{1/n}) \ (x \to \infty).
\end{align*}
\end{proposition}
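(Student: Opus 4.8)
The strategy is to reduce everything to the preceding lemma of \cite{GH}, which states $\Ri(x) = R(x) + O((\log\log x)^2) \ (x \to \infty)$ with $R(x) = \sum_{n \le \log x}\frac{\mu(n)}{n}\li(x^{1/n})$, and to combine it with the elementary first-order asymptotic $\li(y) \sim y/\log y \ (y \to \infty)$. As in the proof of Proposition \ref{RAprop2}, what must be shown is that for each $N$ with $\mu(N) \ne 0$ the remainder of $\Ri(x)$ after its first $N-1$ terms is asymptotic to the $N$th term; since adjoining terms with $\mu(n) = 0$ changes neither the partial sums nor this requirement, this is precisely the content of the claimed asymptotic expansion.

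So fix such an $N$. For $x \ge e^N$ one has $N \le \log x$, so $R(x)$ contains all the terms with $n \le N$, and the lemma gives
$$\Ri(x) - \sum_{n=1}^{N-1}\frac{\mu(n)}{n}\li(x^{1/n}) = \frac{\mu(N)}{N}\li(x^{1/N}) + \sum_{N < n \le \log x}\frac{\mu(n)}{n}\li(x^{1/n}) + O\big((\log\log x)^2\big).$$
I would then bound the middle sum crudely: since $\li$ is increasing on $(1,\infty)$ and $x^{1/n} \le x^{1/(N+1)}$ for $n \ge N+1$, and since the sum has at most $\log x$ summands,
$$\Big| \sum_{N < n \le \log x}\frac{\mu(n)}{n}\li(x^{1/n}) \Big| \le (\log x)\,\li\big(x^{1/(N+1)}\big).$$
No uniform asymptotic for $\li$ is needed here — only its monotonicity — which is why I do not expect a real obstacle at this step.

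It then remains to check that the last two terms on the right-hand side are negligible compared with $\frac{\mu(N)}{N}\li(x^{1/N})$. Applying $\li(y) \sim y/\log y$ with the fixed exponents $1/N$ and $1/(N+1)$ gives $\frac{\mu(N)}{N}\li(x^{1/N}) \asymp x^{1/N}/\log x$ and $(\log x)\li(x^{1/(N+1)}) \asymp x^{1/(N+1)}$; since $x^{1/N}/x^{1/(N+1)} = x^{1/(N(N+1))}$ grows faster than any power of $\log x$, hence faster than $(\log x)(\log\log x)^2$ as well, both the middle sum and the $O((\log\log x)^2)$ error are $o\big(\frac{\mu(N)}{N}\li(x^{1/N})\big)$ as $x \to \infty$. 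Therefore the remainder after $N-1$ terms is asymptotic to the $N$th term, and the proposition follows.

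The genuinely substantive input is the lemma of \cite{GH}; the rest is elementary bookkeeping, the only mild subtlety being the handling of the indices $n$ with $\mu(n) = 0$, which I would dispatch exactly as above by comparing against the nearest nonzero term. If one wanted a cleaner final statement one could phrase the conclusion as $\Ri(x) - \sum_{n=1}^{N}\frac{\mu(n)}{n}\li(x^{1/n}) = O\big(x^{1/(N+1)}/\log x\big) \ (x \to \infty)$ for every $N$, which the same computation yields and which makes the "asymptotic expansion" assertion unambiguous.
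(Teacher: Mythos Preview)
Your proof is correct and follows essentially the same route as the paper's: invoke the lemma $\Ri(x) = R(x) + O((\log\log x)^2)$, bound the tail $\sum_{N < n \le \log x}\frac{\mu(n)}{n}\li(x^{1/n})$ by $(\log x)\,\li(x^{1/(N+1)}) = O(x^{1/(N+1)})$, and compare against the $N$th term $\asymp x^{1/N}/\log x$. Your explicit restriction to indices $N$ with $\mu(N) \ne 0$ is in fact a small improvement in care over the paper's write-up, which leaves that point implicit.
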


\begin{proof}
Let $N > 1$ be a fixed positive integer.  By the lemma, for $x > e^{N}$ one has
\begin{align*}\Ri(x) & = \sum_{n \leq \log x} \frac{\mu(n)}{n} \li (x^{1/n}) + O( (\log \log x)^2)  \\
  & = \sum_{n = 1}^{N} \frac{\mu(n)}{n} \li (x^{1/n}) + \sum_{N< n \leq \log x } \frac{\mu(n)}{n} \li (x^{1/n}) + O( (\log \log x)^2) \\
& = \sum_{n = 1}^{N} \frac{\mu(n)}{n} \li (x^{1/n}) + O\left( \li (x^{1/(N+1)})\log x\right) + O( (\log \log x)^2) \\
& = \sum_{n = 1}^{N} \frac{\mu(n)}{n} \li (x^{1/n}) + O\left( x^{1/(N+1)}\right) \\
& = \sum_{n = 1}^{N-1} \frac{\mu(n)}{n} \li (x^{1/n}) + O\left( \frac{\mu(N)}{N} \li (x^{1/N}) \right) \ (x \to \infty).
\end{align*}
The proposition follows.
\end{proof}

\begin{corollary}\label{etaconj}
One has
\begin{align*} \li(x)-\Ri(x) \sim \frac{\sqrt{x} }{\log x} \ (x \to \infty)
\end{align*}
and
\begin{align*}
\li(x)-\Ri(x) - \frac{1}{2}\li(\sqrt{x}) \sim \frac{1}{3}\li(\sqrt[3]{x}) \sim \frac{\sqrt[3]{x}}{\log x} \ (x \to \infty).
\end{align*}
\end{corollary}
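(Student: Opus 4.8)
The plan is to deduce both relations directly from Proposition~\ref{RAprop} together with the elementary estimate $\li(t) \sim t/\log t$ $(t \to \infty)$, which follows from the series representation (\ref{liseries}). Recall that $\Ri(x) = \sum_{n=1}^\infty \frac{\mu(n)}{n}\li(x^{1/n})$ with $\mu(1)=1$ and $\mu(2)=\mu(3)=-1$, so that
$$\li(x)-\Ri(x) = -\sum_{n=2}^\infty \frac{\mu(n)}{n}\li(x^{1/n}), \qquad \li(x)-\Ri(x)-\tfrac12\li(\sqrt{x}) = -\sum_{n=3}^\infty \frac{\mu(n)}{n}\li(x^{1/n}).$$
It therefore suffices to extract the leading term of each of these two tails.

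For this I would invoke the intermediate estimate established inside the proof of Proposition~\ref{RAprop}: for each fixed integer $N \ge 1$ one has
$$\Ri(x) = \sum_{n=1}^N \frac{\mu(n)}{n}\li(x^{1/n}) + O\!\left(x^{1/(N+1)}\right) \quad (x \to \infty).$$
Taking $N=2$ yields $\li(x)-\Ri(x) = \tfrac12\li(\sqrt{x}) + O(x^{1/3})$, and taking $N=3$ yields $\li(x)-\Ri(x)-\tfrac12\li(\sqrt{x}) = \tfrac13\li(\sqrt[3]{x}) + O(x^{1/4})$.

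To conclude, simplify the main terms: since $\log\sqrt{x} = \tfrac12\log x$ and $\log\sqrt[3]{x} = \tfrac13\log x$, the estimate $\li(t)\sim t/\log t$ gives $\tfrac12\li(\sqrt{x}) \sim \frac{\sqrt{x}}{\log x}$ and $\tfrac13\li(\sqrt[3]{x}) \sim \frac{\sqrt[3]{x}}{\log x}$; moreover these main terms dominate their error terms, since $\frac{\sqrt x/\log x}{x^{1/3}} = \frac{x^{1/6}}{\log x} \to \infty$ and $\frac{x^{1/3}/\log x}{x^{1/4}} = \frac{x^{1/12}}{\log x} \to \infty$. Combining, $\li(x)-\Ri(x) \sim \tfrac12\li(\sqrt{x}) \sim \frac{\sqrt{x}}{\log x}$ and $\li(x)-\Ri(x)-\tfrac12\li(\sqrt{x}) \sim \tfrac13\li(\sqrt[3]{x}) \sim \frac{\sqrt[3]{x}}{\log x}$, as claimed.

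The corollary has essentially no hard step; it is a bookkeeping consequence of Proposition~\ref{RAprop}. The only subtlety worth flagging is that the asymptotic expansion of $\Ri(x)$ has infinitely many vanishing terms (those indexed by non-squarefree $n$), so one cannot blindly peel off ``the next term'' of a partial sum; one truncates only at the squarefree indices $n=1,2,3$, where $\mu(n)=1,-1,-1$, and one uses the explicit remainder $O(x^{1/(N+1)})$ rather than the rewritten form $O\bigl(\frac{\mu(N)}{N}\li(x^{1/N})\bigr)$ appearing at the end of the proof of Proposition~\ref{RAprop}.
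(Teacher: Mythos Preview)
Your argument is correct and is exactly the deduction the paper intends: the corollary is stated without proof immediately after Proposition~\ref{RAprop}, and your use of the explicit remainder $O(x^{1/(N+1)})$ from that proof, together with $\li(t)\sim t/\log t$, is the natural way to unpack it. Your closing caveat about vanishing $\mu(n)$ terms is a valid general point, but note that for the two truncations actually needed here ($N=2$ and $N=3$) the indices $1,2,3$ are all squarefree, so the issue does not arise in this particular corollary.
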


By (\ref{RE}) and M\"obius inversion, one has
\begin{align*}
\li(e^x) = \sum_{n \leq x} \frac{1}{n} R(e^{x/n}), \quad x > 0
\end{align*}
and therefore
\begin{align*}
\li(x) = \sum_{n \leq \log x} \frac{1}{n} R(x^{1/n}), \quad x > 1.
\end{align*}
However, for all $x  > 0$, one has $\lim_{n \to \infty} \Ri(x^{1/n})  =  \Ri(1) =  1$, so that the sum $\sum_{n = 1}^\infty \frac{1}{n} \Ri (x^{1/n})$ diverges for all $x > 0$.   Nevertheless, one has the following.

\begin{proposition}\label{RApropa}
One has the (divergent) asymptotic expansion
\begin{align*}
\li(x) \sim \sum_{n = 1}^\infty \frac{1}{n} \Ri (x^{1/n}) \ (x \to \infty).
\end{align*}
\end{proposition}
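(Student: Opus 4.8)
The plan is to imitate the proof of Proposition~\ref{RAprop}, with $\li$ and $\Ri$ now playing the roles that $\Pi$ and $\li$ played there. The two inputs are the M\"obius inversion formula $\li(x) = \sum_{n \le \log x} \frac1n R(x^{1/n})$ recorded just above the proposition, and the Lemma of \cite{GH}, that $\Ri(y) = R(y) + O((\log\log y)^2)$ as $y \to \infty$. First I would fix an integer $N \ge 1$ and, for $x > e^N$, split the inversion formula as
$$\li(x) = \sum_{n = 1}^{N-1} \frac1n R(x^{1/n}) + \frac1N R(x^{1/N}) + \sum_{N < n \le \log x} \frac1n R(x^{1/n}).$$

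For each of the finitely many indices $1 \le n \le N$ one has $x^{1/n} > e$, so the Lemma applies and gives $R(x^{1/n}) = \Ri(x^{1/n}) + O((\log\log x)^2)$ (with implied constant depending on the fixed $n$); summing these, the first two groups of terms become $\sum_{n=1}^{N-1} \frac1n \Ri(x^{1/n}) + \frac1N \Ri(x^{1/N}) + O((\log\log x)^2)$. For the tail I would use a crude uniform bound on $R$: from $R(y) = \sum_{m \le \log y} \frac{\mu(m)}{m} \li(y^{1/m})$, the positivity and monotonicity of $\li$ on $[e,\infty)$, the bound $\li(y) \ll y/\log y$ there, and $\sum_{m \le \log y} \frac1m \le 1 + \log\log y$, one gets $|R(y)| \ll y$ uniformly for $y \ge e$. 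Since $x^{1/n} \le x^{1/(N+1)}$ for $n > N$ and the tail has at most $\log x$ terms, this gives $\left| \sum_{N < n \le \log x} \frac1n R(x^{1/n}) \right| \ll (\log x)\, x^{1/(N+1)}$. Assembling the three pieces,
$$\li(x) - \sum_{n=1}^{N-1} \frac1n \Ri(x^{1/n}) = \frac1N \Ri(x^{1/N}) + O\!\left( (\log\log x)^2 \right) + O\!\left( (\log x)\, x^{1/(N+1)} \right).$$
Now $\Ri(y) \sim \li(y) \sim y/\log y$ as $y \to \infty$ --- the first relation from Corollary~\ref{etaconj} and the second classical --- so $\tfrac1N \Ri(x^{1/N}) \sim \tfrac{x^{1/N}}{\log x}$, and this dominates both error terms, the second because $x^{1/N - 1/(N+1)} = x^{1/(N(N+1))}$ outgrows every power of $\log x$. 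Hence $\li(x) - \sum_{n=1}^{N-1} \frac1n \Ri(x^{1/n}) \sim \frac1N \Ri(x^{1/N})$ as $x \to \infty$, and since $N \ge 1$ was arbitrary this is exactly the asserted asymptotic expansion.

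The step I expect to require the most care is the tail estimate, since one needs a bound on $R(y)$ valid \emph{uniformly} across $y \in [e, x]$; near $y = e$ the asymptotic formulas for $\li$ degenerate (for instance $\log\log y \to 0$ while $R(e) = \li(e) \ne 0$), so the bound should be read off from the truncated-sum definition of $R$, together with a trivial boundedness check on a neighborhood of $e$, rather than from the Lemma itself. Alternatively, if one checks that the implied constant in the Lemma is absolute, one may instead apply the Lemma to every $y = x^{1/n}$ with $n \le \log x$ at once and bound the remaining $\sum_{N < n \le \log x} \frac1n \Ri(x^{1/n})$ by $(\log x)\,\tfrac1{N+1}\Ri(x^{1/(N+1)}) = O(x^{1/(N+1)})$ using the monotonicity of $\Ri$; everything else is a routine repetition of the arguments already used for Propositions~\ref{RAprop2} and \ref{RAprop}.
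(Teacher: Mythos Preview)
Your proof is correct and close in spirit to the paper's, but the tail is handled differently. The paper subtracts $\sum_{n=1}^{N}\frac{1}{n}\Ri(x^{1/n})$ and writes the difference as
\[
\sum_{N<n\le\log x}\frac{1}{n}\Ri(x^{1/n})\;+\;\sum_{n\le\log x}\frac{1}{n}\bigl(R(x^{1/n})-\Ri(x^{1/n})\bigr),
\]
then applies the Lemma of \cite{GH} \emph{uniformly} to every term of the second sum to get an $O((\log\log x)^3)$ error, and argues (as in Proposition~\ref{RAprop2}) that the first sum is $\sim\frac{1}{N+1}\Ri(x^{1/(N+1)})$. Your main route instead applies the Lemma only to the finitely many indices $n\le N$ and disposes of the tail by a direct crude bound $|R(y)|\ll y$, which has the advantage that you never need the implied constant in the Lemma to be absolute---the uniformity concern you raise in your final paragraph is exactly the point on which the paper's argument is tacit. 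Your ``alternative'' in that paragraph is essentially what the paper does. Either way the error is swallowed by a positive power of $x$, so both variants go through.
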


\begin{proof}
Let $N $ be a fixed positive integer. For $x > e^{N}$ one has
\begin{align*}
\li(x) - \sum_{n = 1}^N  \frac{1}{n} \Ri (x^{1/n}) &  =  \sum_{N < n \leq \log x} \frac{1}{n} \Ri (x^{1/n})+ \sum_{n \leq \log x}  \frac{1}{n} (R(x^{1/n}) - \Ri (x^{1/n}))\\
 & = \sum_{N < n \leq \log x} \frac{1}{n} \Ri (x^{1/n}) + O\left( \sum_{n \leq \log x} \frac{(\log \log x - \log n)^2}{n} \right) \\
 & = \sum_{N < n \leq \log x} \frac{1}{n} \Ri (x^{1/n}) + O((\log \log x)^3) \\
 & \sim  \frac{1}{N+1} \Ri (x^{1/(N+1)}) \ (x \to \infty).
\end{align*}
The proposition follows.
\end{proof}


\begin{remark}
Using Riemann's approximation $\Ri(x)$ to $\pi(x)$, we can provide a plausible explanation for Legendre's approximation $L \approx 1.08366$  of the Legendre constant $L = \lim_{x \to \infty} A(x) =  1$, where 
$$A(x) = \log x-\frac{x}{\pi(x)}, \quad x > 0$$
is the unique function such that $\pi(x) = \frac{x}{\log x- A(x)}$ for  all $x > 0$.
 Figure \ref{eureka0c} compares Riemann's approximation $\Ri(x)$ with Gauss's approximation $\li(x)$, on a lin-log scale.   Notice that the graph of $x-1-\frac{e^x}{\Ri(e^x)}$ consistently traces the ``center'' of the wiggly graph of $A(e^x)-1 = x-1-\frac{e^x}{\pi(e^x)}$ and is a better approximation, at least for small $x$, than is $x-1-\frac{e^x}{\li(e^x)}$.  Figure \ref{primes15} compares the functions $x-\frac{e^x}{\Ri(e^x)}$ and $A(e^x) = x-\frac{e^x}{\pi(e^x)}$ on a smaller interval.  It is interesting to observe that the function 
$\log x-\frac{x}{\Ri(x)}$, which is Riemann's approximation to $A(x)$,  appears to attain a global maximum of approximately $1.08356$ at $x \approx 216811 \approx e^{12.2871}$, with a very small derivative nearby that appears to attain a local (and perhaps even global) minimum of only about $-3.68 \times 10^{-9}$ somewhat near the point $(475000, 1.0828)$.   These features offer a plausible explanation of how Legendre was led to his approximation $L \approx 1.08366$.   See Figure \ref{primes12} for a graph of the derivative of $\log x-\frac{x}{\Ri(x)}$ near its apparent local minimum.   
\begin{figure}[ht!]
\centering
\includegraphics[width=100mm]{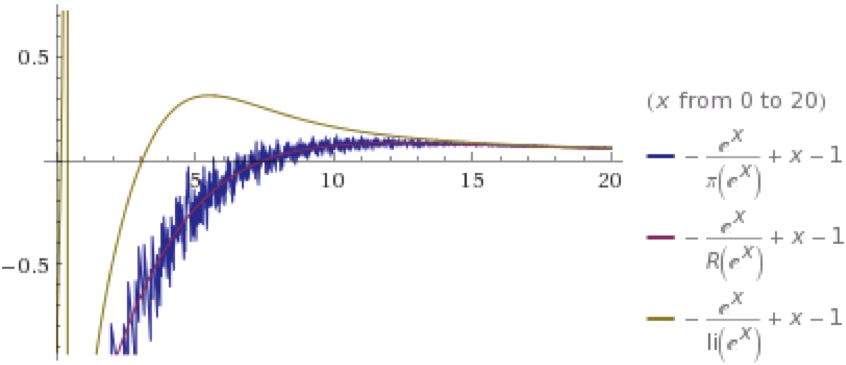}
\caption{Graph of $x-1-\frac{e^x}{\pi(e^x)}$,  $x-1-\frac{e^x}{\Ri(e^x)}$, and  $x-1-\frac{e^x}{\li(e^x)}$   \label{eureka0c}}
\end{figure}
\begin{figure}[ht!]
\centering
\includegraphics[width=100mm]{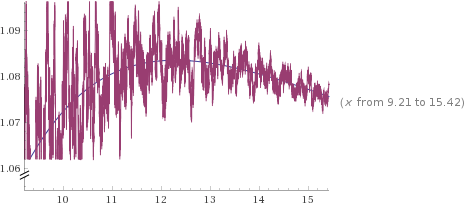}
\caption{Graph of $x-\frac{e^x}{\Ri(e^x)}$ and $x-\frac{e^x}{\pi(e^x)}$  on $[\log(10^4), \log(10^6)]$ \label{primes15}}
\end{figure}
\begin{figure}[ht!]
\centering
\includegraphics[width=110mm]{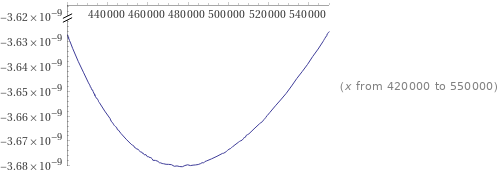}
\caption{Graph of $\frac{d}{dx}\left(\log x-\frac{x}{\Ri(x)}\right)$ \label{primes12}}
\end{figure}
\end{remark}

\subsection{Prime power counting functions}

For all $x > 0$, let $$\pi^*(x) = \sum_{k = 1}^\infty \sum_{p^k \leq x} 1$$ denote the number of prime powers (excluding $1$) less than or equal to $x$, so that
$$\pi^*(x) = \sum_{n = 1}^\infty \pi(\sqrt[n]{x}) = \sum_{n \leq \log_2 x} \pi(\sqrt[n]{x})$$
for all $x > 0$, and also let
$$\widetilde{\pi}(x) = \pi^*(x)-\pi(x) = \sum_{n = 2}^\infty \sum_{p^n \leq x} 1 =  \sum_{n = 2}^\infty \pi(\sqrt[n]{x})  =  \sum_{1 < n \leq \log_2 x} \pi(\sqrt[n]{x})$$
denote the number of composite prime powers less than or equal to $x$.   By M\"obius inversion, one has
$$\pi(x) = \sum_{n  \leq \log_2 x} \mu(n) \pi^*(\sqrt[n]{x}) =  \sum_{n = 1}^\infty \mu(n) \pi^*(\sqrt[n]{x}).$$
 One easily verifies the following analogue of Propositions \ref{RAprop2} and \ref{RAprop3}.

\begin{proposition}
One has the asymptotic expansions
$$\pi^*(x) \sim \sum_{n = 1}^\infty \pi(\sqrt[n]{x}) \ (x \to \infty)$$
and
$$\pi(x) \sim \sum_{n = 1}^\infty \mu(n)\pi^*(\sqrt[n]{x}) \ (x \to \infty)$$
\end{proposition}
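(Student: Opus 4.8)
The plan is to reproduce, essentially verbatim, the sandwiching argument used to prove Proposition~\ref{RAprop2} (and, for the second expansion, the variant behind Proposition~\ref{RAprop3}), with the prime number theorem supplying the decay estimate that renders the tails negligible. The one structural input beyond the prime number theorem is the truncation of the defining series: since $\pi(\sqrt[n]{x}) = \pi^*(\sqrt[n]{x}) = 0$ whenever $\sqrt[n]{x} < 2$, i.e.\ whenever $n > \log_2 x$, the sums $\pi^*(x) = \sum_{n=1}^\infty \pi(\sqrt[n]{x})$ and $\pi(x) = \sum_{n=1}^\infty \mu(n)\pi^*(\sqrt[n]{x})$ are in fact finite for each fixed $x$, every term of index $n > \log_2 x$ vanishing. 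Combined with the prime number theorem in the weak form $\pi(y) \sim \pi^*(y) \sim y/\log y$, which gives $\pi(x^{1/k}) \sim \pi^*(x^{1/k}) \sim k\,x^{1/k}/\log x$, one gets for every fixed $N$ the key decay relation
$$\frac{(\log_2 x)\,\pi^*(x^{1/(N+1)})}{\pi^*(x^{1/N})} = O\!\left((\log x)\,x^{\frac{1}{N+1}-\frac{1}{N}}\right) = O\!\left((\log x)\,x^{-\frac{1}{N(N+1)}}\right) \longrightarrow 0 \quad (x \to \infty),$$
and likewise with $\pi$ in place of $\pi^*$.

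For the first expansion I would fix $N \geq 1$ and work with $x > 2^N$. The terms of $\pi^*(x) - \sum_{n=1}^{N-1}\pi(\sqrt[n]{x})$ of index $N < n \leq \log_2 x$ number at most $\log_2 x$, and each is at most $\pi(x^{1/(N+1)})$, so
$$\pi(x^{1/N}) \;\leq\; \pi^*(x) - \sum_{n=1}^{N-1}\pi(x^{1/n}) \;\leq\; \pi(x^{1/N}) + (\log_2 x)\,\pi(x^{1/(N+1)}).$$
Dividing by $\pi(x^{1/N})$ and letting $x \to \infty$ using the decay relation gives
$$\lim_{x \to \infty} \frac{\pi^*(x) - \sum_{n=1}^{N-1}\pi(x^{1/n})}{\pi(x^{1/N})} = 1,$$
which is precisely the asymptotic expansion $\pi^*(x) \sim \sum_{n=1}^\infty \pi(\sqrt[n]{x})$.

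For the second expansion the coefficients $\mu(n)$ are no longer nonnegative, so I would replace the two-sided sandwich by the direct estimate coming from the M\"obius inversion $\pi(x) = \sum_{n \leq \log_2 x}\mu(n)\pi^*(\sqrt[n]{x})$: for $x > 2^N$,
$$\left| \pi(x) - \sum_{n=1}^{N}\mu(n)\pi^*(x^{1/n})\right| = \left| \sum_{N < n \leq \log_2 x}\mu(n)\pi^*(x^{1/n})\right| \leq (\log_2 x)\,\pi^*(x^{1/(N+1)}),$$
so that $\pi(x) - \sum_{n=1}^{N}\mu(n)\pi^*(x^{1/n}) = o\!\bigl(\pi^*(x^{1/N})\bigr)$ as $x \to \infty$ by the decay relation, which is exactly the assertion that $\sum_{n=1}^\infty \mu(n)\pi^*(\sqrt[n]{x})$ is an asymptotic expansion of $\pi(x)$. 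There is no real obstacle here; the only point requiring a little care is the bookkeeping around the zeros of $\mu$, and this is handled automatically by the Poincar\'e-style definition of asymptotic expansion from \cite{ell}, under which one compares the $N$-term error against the $N$th scale element $\pi^*(x^{1/N})$ rather than against the first nonzero term beyond it; the crude tail bound $(\log_2 x)\,\pi^*(x^{1/(N+1)})$ already suffices for that.
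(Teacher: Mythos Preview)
Your proposal is correct and is exactly the verification the paper has in mind: the paper gives no explicit proof, merely stating that ``one easily verifies the following analogue of Propositions~\ref{RAprop2} and~\ref{RAprop3},'' and your sandwiching argument for $\pi^*$ and crude tail bound for the M\"obius-inverted sum are precisely those analogues. Your remark on handling the zeros of $\mu$ via the Poincar\'e definition is also the correct way to read the slightly informal $O(\frac{\mu(N)}{N}\li(x^{1/N}))$ in the proof of Proposition~\ref{RAprop}.
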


In general, for any $O$ bound, one may seek explicit $O$ constants.  For example, by \cite[Lemma 3]{pan2}, one has
$$\pi(x) \leq \pi^*(x) \leq \pi(x) + \pi(\sqrt{x}) + 3\sqrt[3]{x}$$
for all $x \geq 9621$.   Thus, explicit $O$ constants can be sought for any of the terms of any of the asymptotic expansions proved in \cite{ell} and in this paper.   We do not pursue this extensive line of research here, since we are interested in pursuing asymptotic expansions rather than explicit inequalities.

\section{Asymptotic continued fraction expansions}

\subsection{Weighted prime power counting functions}

 It follows from \cite[Lemma 2.1]{ell}, the prime number theorem with error term, and our  results in Section 3 that, with respect to the asymptotic sequence $\{\frac{1}{(\log x)^n}\}$, the functions $\pi(x)$, $\Pi(x)$, $\li(x)$, and $\Ri(x)$ all have the same asymptotic continued fraction expansions, as described by \cite[Theorems 1.1 and 1.2]{ell}.  Similarly, one has the following.

\begin{theorem}
Let $n$ be a positive integer, and let $f(x)$ be any of the following functions.
\begin{enumerate}
\item $\displaystyle \Pi(x)- \sum_{k = 1}^{n-1} \frac{1}{k}\pi(\sqrt[k]{x}) = \sum_{k = n}^{\infty} \frac{1}{k}\pi(\sqrt[k]{x})$.
\item $\displaystyle \li(x)- \sum_{k = 1}^{n-1} \frac{1}{k}\Ri(\sqrt[k]{x})$.
\item $\displaystyle \mu(n)\left(\pi(x)-\sum_{k = 0}^{n-1} \frac{\mu(k)}{k}\Pi(\sqrt[k]{x})\right) = \mu(n)\left(\sum_{k = n}^{\infty} \frac{\mu(k)}{k}\Pi(\sqrt[k]{x})\right)$.
\item $\displaystyle \mu(n)\left( \Ri(x)-\sum_{k = 1}^{n-1} \frac{\mu(k)}{k}\li(\sqrt[k]{x})\right) =\mu(n)\left( \sum_{k = n}^{\infty} \frac{\mu(k)}{k}\li(\sqrt[k]{x})\right)$.
\item $\displaystyle\frac{1}{n}\pi(\sqrt[n]{x})$.
\item $\displaystyle\frac{1}{n}\Pi(\sqrt[n]{x})$.
\item $\displaystyle\frac{1}{n}\Ri(\sqrt[n]{x})$.
\item $\displaystyle \frac{1}{n}\li(\sqrt[n]{x})$.
\item $\displaystyle \frac{1}{n}\pi^*_n(x)$, where $\displaystyle \pi^*_n(x) =  \sum_{k = n}^{\infty} \pi(\sqrt[k]{x}) = \sum_{k = n}^\infty \sum_{p^k \leq x} 1$.
\item $\displaystyle \sum_{k = n}^{\infty} \frac{1}{k}\pi(\sqrt[k]{x}) = \sum_{k = n}^\infty \sum_{p^k \leq x} \frac{1}{k}$.
\end{enumerate}
One has the asymptotic continued fraction expansions
$$\displaystyle f(x) \sim \cfrac{\frac{ \sqrt[n]{x}}{\log x}}{1 \,-} \  \cfrac{\frac{n}{\log x}}{1 \,-} \  \cfrac{\frac{n}{\log x}}{1 \,-}\  \cfrac{\frac{2n}{\log x}}{1 \,-}\  \cfrac{\frac{2n}{\log x}}{1 \,-} \  \cfrac{\frac{3n}{\log x}}{1 \,-}\  \cfrac{\frac{3n}{\log x}}{1 \,-} \ \cdots \ (x \to \infty)$$
and
$$\displaystyle f(x)  \sim \cfrac{\sqrt[n]{x}}{\log x-n \,-} \  \cfrac{n^2}{\log x-3n \,-} \  \cfrac{(2n)^2}{\log x-5n \,-}\  \cfrac{(3n)^2}{\log x-7n \,-} \ \cfrac{(4n)^2}{\log x-9n \,-}\  \cdots \ (x \to \infty).$$
Consequently, the best rational approximations of the function $e^{-x/n}f(e^x)$ are precisely the approximants $w_k(x)$ of the continued fraction
$$\cfrac{1}{x-n \,-} \  \cfrac{n^2}{x-3n \,-} \  \cfrac{(2n)^2}{x-5n \,-}\  \cfrac{(3n)^2}{x-7n \,-} \  \cfrac{(4n)^2}{x-4n \,-} \ \cdots.$$
Moreover, one has
$$e^{-x/n}f(e^x) - w_k(x) \sim \frac{(n^k k!)^2}{x^{2k+1}}$$
for all $n  \geq 0$.
\end{theorem}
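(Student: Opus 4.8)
The plan is to deduce the theorem from the corresponding results for $\pi(x)$ itself (\cite[Theorems 1.1 and 1.2]{ell}) together with the relative asymptotic expansions established in Section 2. The unifying observation is that each of the ten functions $f(x)$ listed differs from $\frac{1}{n}\pi(\sqrt[n]{x})$ by an error that is $O(x^{1/(n+1)})$, hence negligible compared to every term of the asymptotic expansion of $\frac{1}{n}\pi(\sqrt[n]{x})$. For (1), (3), (9), (10) this is exactly the content of Propositions \ref{RAprop2}, \ref{RAprop3}, and the analogue for $\pi^*$; for (2) and (4) it follows from Propositions \ref{RAprop} and \ref{RApropa} (together with the relative expansions relating $\li$, $\Ri$, $\Pi$, $\pi$), and for (6), (7), (8) it follows from the fact, alluded to at the start of Section 3.1, that $\pi(y)$, $\Pi(y)$, $\li(y)$, $\Ri(y)$ all have the same asymptotic expansion $\sim \frac{y}{\log y}\sum_{k\geq 0}\frac{k!}{(\log y)^k}$; substituting $y = x^{1/n}$ then reduces all cases to case (5).

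So the main step is case (5): establish the two displayed continued fraction expansions for $\frac{1}{n}\pi(x^{1/n})$. Here I would substitute $x = e^{nu}$, so that $\frac{1}{n}\pi(e^{nu}/1)\cdot$ — more precisely, write $g(u) = e^{-u}\cdot\frac{1}{n}\pi(e^{nu})$ and relate it to the function $h(v) = e^{-v}\pi(e^v)/v$-type object already analyzed in \cite{ell}. By the prime number theorem with error term, $\frac{\pi(e^x)}{e^x} \sim \sum_{k\geq 0}\frac{\mu_k(\gamma_0)}{x^{k+1}}$ where $\gamma_0$ is the exponential distribution with parameter $1$; replacing $x$ by $nu$ gives $\frac{\pi(e^{nu})}{e^{nu}} \sim \sum_{k\geq 0}\frac{\mu_k(\gamma_0)}{(nu)^{k+1}} = \sum_{k\geq 0}\frac{k!/n^{k+1}}{u^{k+1}}$, since $\mu_k(\gamma_0) = k!$. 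The moments $k!/n^{k+1}$ are $\frac{1}{n}m_k(\gamma_0^{(n)})$ where $\gamma_0^{(n)}$ is the exponential distribution with weight parameter $n$ (mean $1/n$), whose Jacobi/Stieltjes parameters are obtained by scaling those of $\gamma_0$ by $1/n$: the known expansion for $\gamma_0$ has $a_1 = 1$, $b_n = -(2n-1)$, $a_{n+1} = -n^2$ in the Jacobi form (matching the second displayed CF of $\pi(x)/x$ in the introduction), and scaling the variable by $1/n$ multiplies $a_1$ by $1/n$... — I would instead apply Proposition \ref{aprop} directly to $f(z) = \frac{1}{n}e^{-z}\pi(e^{nz})$ with the measure $\mu = \frac{1}{n}\gamma_0^{(n)}$, reading off its moments $\mu_k = k!/n^{k+1}$ and computing its Jacobi parameters $a_n, b_n$ by the standard recursion for the moment sequence $\{k!/n^{k+1}\}$, which gives exactly the coefficients $\frac{kn}{\log x}$ appearing in the stated expansion.

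The computation of those Jacobi parameters is the one genuinely computational step, but it is routine given that the $n=1$ case is already recorded in the introduction: the moment sequence $\{k!\}$ has Stieltjes continued fraction with partial numerators $1,1,1,2,2,3,3,\dots$ and Jacobi form $\cfrac{1}{z-1-}\cfrac{1}{z-3-}\cfrac{4}{z-5-}\cdots$; replacing $z$ by $nz$ and dividing by $n$ sends moments $k!$ to $k!/n^{k+1}$ and, by the elementary equivalence transformation of continued fractions, sends $a_1 \mapsto a_1/n^2$... one checks the homogeneity: the Stieltjes coefficients $a_m^S$ of $\{k!/n^{k+1}\}$ equal $a_m^S(\{k!\})/n$, giving Jacobi partial numerators $(jn)^2/(\text{stuff})$ and partial denominators $\log x - (2j-1)n$ after setting $z = \log x$. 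This yields precisely the two displayed expansions, and the final claim about best rational approximations and the error term $e^{-x/n}f(e^x) - w_k(x)\sim \frac{(n^k k!)^2}{x^{2k+1}}$ then follows from the "Consequently" clause of Proposition \ref{aprop} together with the explicit value $\mu_{2k} = (2k)!/n^{2k+1}$ and the standard formula $f - w_k \sim (a_1 a_2\cdots a_k)\cdot(\text{something})/z^{2k+1}$; concretely the product $a_1\cdots a_{2k+1}$ of the Jacobi numerators telescopes to $(n^k k!)^2$. The main obstacle is not any single hard estimate but rather organizing the reduction cleanly: verifying that in every one of the ten cases the difference $f(x) - \frac{1}{n}\pi(x^{1/n})$ (or the appropriate surrogate) is $O(x^{1/(n+1)})$ and hence beyond all orders of the asymptotic scale $\{x^{1/n}/(\log x)^k\}_k$ — for cases (2) and (4) this requires chaining several of the Section 2 propositions and being careful that the $O((\log\log x)^j)$ errors there are also beyond all orders.
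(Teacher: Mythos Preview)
Your proposal is correct and follows essentially the same route as the paper. The paper does not spell out a proof of this theorem at all: it is stated immediately after the sentence ``Similarly, one has the following,'' with the implicit justification being \cite[Lemma 2.1, Theorems 1.1 and 1.2]{ell}, the prime number theorem with error term, and the relative asymptotic expansions of Section~2. Your write-up simply makes this explicit --- reduce every case to $\frac{1}{n}\pi(x^{1/n})$ via the Section~2 propositions, then obtain the continued fractions for $\frac{1}{n}\pi(x^{1/n})$ from the $n=1$ case in \cite{ell} by the substitution $y=x^{1/n}$ and an equivalence transformation of the continued fraction. One small inaccuracy worth cleaning up: the blanket claim that each $f(x)$ differs from $\frac{1}{n}\pi(x^{1/n})$ by $O(x^{1/(n+1)})$ is not literally true for cases (6)--(8), where the discrepancy is only $O\bigl(x^{1/n}(\log x)^{-t}\bigr)$ for every $t$; you already identify the correct mechanism for those cases, so just state the conclusion uniformly as ``beyond all orders in the scale $\{x^{1/n}(\log x)^{-k}\}_k$'' rather than as a single power bound.
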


\begin{corollary}
Let $f(x)$ be any of the following functions.
\begin{enumerate}
\item $\displaystyle \Pi(x)-\pi(x) = \sum_{k = 2}^{\infty} \frac{1}{k}\pi(\sqrt[k]{x}) = -\sum_{k = 2}^{\infty} \frac{\mu(k)}{k}\Pi(\sqrt[k]{x})$.
\item $\displaystyle \li(x)- \Ri(x)   = - \sum_{k = 2}^{\infty} \frac{\mu(k)}{k}\li(\sqrt[k]{x})$.
\item $\displaystyle \frac{1}{2}\widetilde{\pi}(x)$, where $\displaystyle \widetilde{\pi}(x) = \sum_{k = 2}^{\infty} \pi(\sqrt[k]{x}) =  \sum_{k = 2}^\infty \sum_{p^k \leq x} 1$.
\item $\displaystyle\frac{1}{2}\pi(\sqrt{x})$.
\item $\displaystyle\frac{1}{2}\Pi(\sqrt{x})$.
\item $\displaystyle\frac{1}{2}\Ri(\sqrt{x})$.
\item $\displaystyle \frac{1}{2}\li(\sqrt{x})$.
\end{enumerate}
One has the asymptotic continued fraction expansions
\begin{align*}
f(x) \sim \cfrac{\frac{ \sqrt{x}}{\log x}}{1 \,-} \  \cfrac{\frac{2}{\log x}}{1 \,-} \  \cfrac{\frac{2}{\log x}}{1 \,-}\  \cfrac{\frac{4}{\log x}}{1 \,-}\  \cfrac{\frac{4}{\log x}}{1 \,-} \  \cfrac{\frac{6}{\log x}}{1 \,-}\  \cfrac{\frac{6}{\log x}}{1 \,-} \ \cdots \ (x \to \infty)
\end{align*}
and
\begin{align*}
f(x)   \sim \cfrac{\sqrt{x}}{\log x-2 \,-} \  \cfrac{4}{\log x-6 \,-} \  \cfrac{4 \cdot 4}{\log x-10 \,-}\  \cfrac{4 \cdot 9}{\log x-14 \,-} \ \cfrac{4 \cdot 16}{\log x-18 \,-}\  \cdots  \ (x \to \infty).
\end{align*}
Consequently, the best rational approximations of the function $e^{-x/2}f(e^x)$  are precisely the approximants of the continued fraction
$$\cfrac{1}{x-2 \,-} \  \cfrac{4}{x-6 \,-} \  \cfrac{4 \cdot 4}{x-10 \,-}\  \cfrac{4 \cdot 9}{x -14 \,-} \ \cfrac{4 \cdot 16}{x-18 \,-}\  \cdots.$$
\end{corollary}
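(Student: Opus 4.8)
The plan is to obtain this corollary directly as the case $n=2$ of the preceding theorem. The only real work is bookkeeping: one must check that each of the seven functions $f(x)$ listed here occurs among the ten functions listed in the theorem when $n$ is set equal to $2$, and that the alternative closed forms recorded in items (1)--(3) are correct.

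For the identifications, I would argue as follows. Item (1), $\Pi(x)-\pi(x)=\sum_{k\geq 2}\frac{1}{k}\pi(\sqrt[k]{x})$, is item (10) of the theorem with $n=2$, and is equivalently item (1) of the theorem with $n=2$, since $\Pi(x)-\sum_{k=1}^{1}\frac{1}{k}\pi(\sqrt[k]{x})=\Pi(x)-\pi(x)$; the remaining identity $\Pi(x)-\pi(x)=-\sum_{k\geq 2}\frac{\mu(k)}{k}\Pi(\sqrt[k]{x})$ comes from the M\"obius inversion $\pi(x)=\sum_{k\geq 1}\frac{\mu(k)}{k}\Pi(\sqrt[k]{x})$ of Section 2.1, which also exhibits item (1) as $\mu(2)$ times item (3) of the theorem at $n=2$. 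Item (2), $\li(x)-\Ri(x)$, is item (2) of the theorem at $n=2$, and the identity $\li(x)-\Ri(x)=-\sum_{k\geq 2}\frac{\mu(k)}{k}\li(\sqrt[k]{x})$ is immediate from the defining series $\Ri(x)=\sum_{k\geq 1}\frac{\mu(k)}{k}\li(\sqrt[k]{x})$ of Section 2.2, which also exhibits item (2) as $\mu(2)$ times item (4) at $n=2$. Item (3), $\tfrac12\widetilde{\pi}(x)$, is item (9) of the theorem at $n=2$, because $\widetilde{\pi}(x)=\pi^*(x)-\pi(x)=\sum_{k\geq 2}\pi(\sqrt[k]{x})=\pi^*_2(x)$. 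Finally items (4), (5), (6), (7) are items (5), (6), (7), (8) of the theorem at $n=2$, since $\sqrt[2]{x}=\sqrt{x}$.

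With the identifications in hand, the two asymptotic continued fraction expansions and the statement about best rational approximations follow by substituting $n=2$ into the corresponding statements of the theorem and simplifying the constants: $\frac{\sqrt[n]{x}}{\log x}$ becomes $\frac{\sqrt{x}}{\log x}$, the partial numerators $\frac{jn}{\log x}$ become $\frac{2j}{\log x}$, the partial numerators $(jn)^2$ become $(2j)^2=4j^2$, the partial denominators $\log x-(2j-1)n$ become $\log x-(4j-2)$, and $e^{-x/n}f(e^x)$ becomes $e^{-x/2}f(e^x)$. Since there is no new analytic input beyond the theorem and the elementary identities already recorded in Section 2 (M\"obius inversion for $\pi$ in terms of $\Pi$, the defining series for $\Ri$, and $\widetilde{\pi}=\pi^*_2$), I do not expect any genuine obstacle; the only point to be careful about is the matching of the seven functions here with the ten functions of the theorem and the verification of the closed forms in items (1)--(3). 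In short, the corollary is a transcription of the theorem at $n=2$.
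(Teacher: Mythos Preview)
Your proposal is correct and takes essentially the same approach as the paper: the corollary is stated without proof there, being the immediate specialization of the preceding theorem to $n=2$, and your bookkeeping of which of the seven functions matches which of the ten is accurate.
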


\subsection{Sums of $s$th powers of primes}

Consider the function
$$\pi_s(x) = \sum_{p \leq x} p^s, \quad x > 0$$
for complex values of $s$ (so of course $\pi(x) = \pi_0(x)$).  The following $O$ bound is proved using the prime number theorem with error term and Abel's summation formula.

\begin{proposition}
For all $s \in \CC$ with $\operatorname{Re}(s) > -1$ and all $t > 0$, one has
$$\pi_s(x) = -E_1(-(s+1)\log x) + O \left(x^{\operatorname{Re}(s)+1}(\log x)^{-t} \right) \ (x \to \infty).$$
\end{proposition}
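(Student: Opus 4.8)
The plan is to prove the asymptotic formula $\pi_s(x) = -E_1(-(s+1)\log x) + O(x^{\operatorname{Re}(s)+1}(\log x)^{-t})$ by Abel summation against the prime number theorem with error term. First I would recall the form of the PNT with error term convenient for this computation: for every $A > 0$ one has $\pi(u) = \li(u) + O(u(\log u)^{-A})$ as $u \to \infty$, or equivalently one may work with $\psi$ or $\vartheta$; but since we want $\pi_s$, it is cleanest to write $\pi_s(x) = \sum_{p \le x} p^s = \int_{2^-}^{x} u^s \, d\pi(u)$ as a Riemann–Stieltjes integral, and then integrate by parts: $\pi_s(x) = x^s \pi(x) - s\int_2^x u^{s-1}\pi(u)\,du$ (with care about the principal branch of $u^s$ when $s \notin \RR$, which is harmless since $u > 0$).

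Next I would substitute $\pi(u) = \li(u) + r(u)$ with $r(u) = O(u(\log u)^{-A})$ for $A$ chosen large relative to $t$ (in fact $A = t$ suffices after tracking constants, but taking $A$ a bit larger costs nothing). The $r(u)$ contribution is bounded: $x^s r(x) = O(x^{\operatorname{Re}(s)+1}(\log x)^{-A})$, and $s\int_2^x u^{s-1} r(u)\,du = O\!\left(\int_2^x u^{\operatorname{Re}(s)}(\log u)^{-A}\,du\right)$; since $\operatorname{Re}(s) > -1$, the integrand is eventually increasing in the power sense and a standard estimate gives $\int_2^x u^{\operatorname{Re}(s)}(\log u)^{-A}\,du = O(x^{\operatorname{Re}(s)+1}(\log x)^{-A})$ (split at $\sqrt{x}$, or use that $(\log u)^{-A} \le (\log\sqrt{x})^{-A} = 2^A(\log x)^{-A}$ on $[\sqrt x, x]$ and the tail below $\sqrt x$ is $O(x^{(\operatorname{Re}(s)+1)/2})$, which is negligible). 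So the entire error from $r$ is $O(x^{\operatorname{Re}(s)+1}(\log x)^{-t})$ as claimed, for any fixed $t$, by choosing $A \ge t$.

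The remaining main term is $x^s\li(x) - s\int_2^x u^{s-1}\li(u)\,du$, and the task is to identify this with $-E_1(-(s+1)\log x)$ up to an admissible error. Here I would substitute $\li(u) = \int_2^u \frac{dv}{\log v}$ (or use $\li(u) = \frac{u}{\log u} + O\!\left(\frac{u}{(\log u)^2}\right)$, but the exact integral is better for getting the closed form), interchange the order of integration in the double integral $\int_2^x u^{s-1}\int_2^u \frac{dv}{\log v}\,du = \int_2^x \frac{1}{\log v}\int_v^x u^{s-1}\,du\,dv = \frac1s\int_2^x \frac{x^s - v^s}{\log v}\,dv$, so that $x^s\li(x) - s\int_2^x u^{s-1}\li(u)\,du = x^s\li(x) - x^s\int_2^x \frac{dv}{\log v} + \int_2^x \frac{v^s}{\log v}\,dv = \int_2^x \frac{v^s}{\log v}\,dv$. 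Then the substitution $v = e^w$ turns $\int_2^x \frac{v^s}{\log v}\,dv$ into $\int_{\log 2}^{\log x} \frac{e^{(s+1)w}}{w}\,dw$, and a further substitution $y = -(s+1)w$ recognizes this (up to the lower endpoint, which contributes an $O(1)$ constant, negligible against $x^{\operatorname{Re}(s)+1}(\log x)^{-t}$ since $\operatorname{Re}(s) > -1$) as $-E_1(-(s+1)\log x) + C_s$; absorbing the constant $C_s$ and the endpoint terms at $v = 2$ into the error gives the stated formula.

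The main obstacle, I expect, is bookkeeping the branch of $E_1$ and of $u^s$ when $s$ is complex, and verifying that the exponential-integral identity $\int \frac{e^{(s+1)w}}{w}\,dw$ really matches the definition of $E_1(-(s+1)\log x)$ used in the paper (with $E_1(z) = \int_z^\infty \frac{e^{-\zeta}}{\zeta}\,d\zeta$ along a suitable path) including the constant of integration $\gamma + \log(-(s+1)\log x)$-type terms — these are $O((\log x)^{-t})\cdot x^{\operatorname{Re}(s)+1}$-negligible but must be shown to be so. Everything else is routine Abel summation and elementary estimation of $\int u^{\operatorname{Re}(s)}(\log u)^{-A}\,du$; the only genuine analytic input is the prime number theorem with error term, which is available.
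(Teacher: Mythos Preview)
Your proposal is correct and follows exactly the approach the paper indicates: the paper does not give a detailed proof but simply states that the bound ``is proved using the prime number theorem with error term and Abel's summation formula,'' which is precisely your plan. Your computation of the main term $\int_2^x \frac{v^s}{\log v}\,dv$ via Fubini and its identification with $-E_1(-(s+1)\log x)$ up to an $O(1)$ constant (absorbed since $\operatorname{Re}(s)+1>0$) is the natural way to fill in the details; the only tiny slip is writing $\li(u)=\int_2^u \frac{dv}{\log v}$ rather than $\li(u)-\li(2)$, but the resulting $O(x^{\operatorname{Re}(s)})$ discrepancy is swallowed by the stated error term, as you effectively note when discussing the endpoint at $v=2$.
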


Consequently, \cite[Theorem 1.1, Lemma 2.1, and Corollary 3.1]{ell} yield the following.

\begin{theorem}
Let $s \in \CC$ with $\operatorname{Re}(s) > -1$.  One has the asymptotic expansion
 \begin{align*}
\pi_s(x) \sim \sum_{k = 0}^\infty \frac{k!x^{s+1}}{((s+1)\log x)^{k+1}} \ (x \to \infty)
 \end{align*}
and the asymptotic continued fraction expansions
 \begin{align*}
\pi_s(x) \sim \cfrac{\frac{x^{s+1}}{(s+1)\log x}}{1 \,-} \  \cfrac{\frac{1}{(s+1)\log x}}{1 \,-} \  \cfrac{\frac{1}{(s+1)\log x}}{1 \,-}\  \cfrac{\frac{2}{(s+1)\log x}}{1 \,-}\  \cfrac{\frac{2}{(s+1)\log x}}{1 \,-} \  \cfrac{\frac{3}{(s+1)\log x}}{1 \,-}\  \cfrac{\frac{3}{(s+1)\log x}}{1 \,-} \ \cdots \ (x \to \infty).
\end{align*}
and
\begin{align*}
\pi_s(x)   \sim \cfrac{x^{s+1}}{(s+1)\log x-1 \,-} \  \cfrac{1}{(s+1)\log x-3 \,-} \  \cfrac{4}{{(s+1)}\log x-5 \,-}\  \cfrac{9}{{(s+1)}\log x-7 \,-} \  \cdots  \ (x \to \infty).
\end{align*}
Let $w_n(x)$ for any nonnegative integer $n$ denote the $n$th approximant of the continued fraction
$$\cfrac{1}{x-1 \,-} \  \cfrac{1}{x-3 \,-} \  \cfrac{4}{x-5 \,-}\  \cfrac{9}{x -7\,-} \ \cfrac{16}{x-9 \,-}\  \cdots.$$
For all nonnegative integers $n$, one has
\begin{align*}
\frac{\pi_s(e^{x/{(s+1)}})}{e^x}-w_n(x) \sim \frac{(n!)^2}{x^{2n+1}} \ (x \to \infty).
\end{align*}
 Moreover, $w_n(x)$ is the unique Pad\'e approximant of  $\frac{\pi_s(e^{x/{(s+1)}})}{e^x}$ at $x = \infty$ of order $[n-1,n]$, and the $w_n(x)$ for all nonnegative integers $n$ are precisely the best rational approximations of the function $\frac{\pi_s(e^{x/(s+1)})}{e^x}$.
\end{theorem}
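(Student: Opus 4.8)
The plan is to deduce the theorem from the preceding proposition together with the continued fraction machinery of \cite{ell} (as packaged in Proposition~\ref{aprop}), reducing everything to the already-known expansion for $\pi(e^x)/e^x$.

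First I would convert the $O$-bound into the stated asymptotic expansion. With $w=(s+1)\log x$, the preceding proposition reads $\pi_s(x)=-E_1(-w)+O\!\left(x^{\operatorname{Re}(s)+1}(\log x)^{-t}\right)$ $(x\to\infty)$ for every $t>0$. The classical asymptotic expansion of the exponential integral gives $-E_1(-w)\sim \frac{e^{w}}{w}\sum_{k\ge 0}\frac{k!}{w^{k}}=\sum_{k\ge 0}\frac{k!\,x^{s+1}}{((s+1)\log x)^{k+1}}$, which is legitimate because $\operatorname{Re}(s+1)>0$ keeps $-w$ in a sector in which that expansion holds. The error term has modulus $x^{\operatorname{Re}(s)+1}(\log x)^{-t}$, and for $t$ taken large (say $t>N+1$) this is $o$ of the modulus $\asymp \frac{k!\,x^{\operatorname{Re}(s)+1}}{|s+1|^{k+1}(\log x)^{k+1}}$ of each of the first $N$ terms; hence the error is negligible against the expansion, which gives the first displayed formula with respect to the asymptotic sequence $\{x^{s+1}/((s+1)\log x)^{k+1}\}$.

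Next I would linearize. Dividing by $x^{s+1}$ and setting $z=(s+1)\log x$ — so that $z$ ranges over the ray $\XX=\{(s+1)t:t>0\}\subset\CC$, on which $e^{z/(s+1)}=e^{t}$ is a positive real and $\pi_s(e^{z/(s+1)})$ is well defined — one obtains
$$\frac{\pi_s(e^{z/(s+1)})}{e^z}\ \sim\ \sum_{k\ge 0}\frac{k!}{z^{k+1}}\qquad(z\to\infty)_\XX.$$
This is precisely the asymptotic expansion of the Stieltjes transform of the exponential distribution $\gamma_0$, whose moments are $\mu_k=k!$; it is the same expansion that $\pi(e^x)/e^x$ satisfies over $\RR_{>0}$. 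Applying Proposition~\ref{aprop} with $\mu=\gamma_0$, which has infinite support, finite moments, and positive Jacobi numerators $a_1=1$, $a_{k+1}=k^2$ (with $b_k=-(2k-1)$), the function $\pi_s(e^{z/(s+1)})/e^z$ has the asymptotic Jacobi continued fraction expansion with exactly these coefficients — the same continued fraction that \cite[Theorem~1.1]{ell} records for $\pi(x)/x$ — whose $n$th approximant $w_n$ satisfies $w_n(z)\sim \sum_{k=0}^{2n-1}k!/z^{k+1}$. Substituting $z=(s+1)\log x$ back and multiplying through by $x^{s+1}$ produces the two displayed continued fraction expansions of $\pi_s(x)$; the equivalent forms (the ``$1\,-$'' form and the Jacobi form) are interchanged by the usual equivalence and contraction transformations of continued fractions, and the variable written $x$ in the final continued fraction of the statement is this $z$.

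Finally, Proposition~\ref{aprop} also asserts that $\pi_s(e^{z/(s+1)})/e^z$ together with $\{a_k\},\{b_k\}$ satisfies conditions (2)(a)--(e) of \cite[Theorem~2.4]{ell}; in particular the approximants $w_n$ of the continued fraction in $x$ displayed in the statement are precisely the best rational approximations of $\pi_s(e^{z/(s+1)})/e^z$, and by \cite[Corollary~3.1]{ell} each $w_n$ is the unique Pad\'e approximant of that function at $\infty$ of order $[n-1,n]$, with $\pi_s(e^{z/(s+1)})/e^z - w_n(z) \sim \frac{a_1a_2\cdots a_{n+1}}{z^{2n+1}}=\frac{(n!)^2}{z^{2n+1}}$. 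The main obstacle is the first step: one must make sure the asymptotic expansion of $-E_1(-(s+1)\log x)$ really is valid for complex $s$ with $\operatorname{Re}(s)>-1$ — i.e.\ that $(s+1)\log x$ stays in a sector where the exponential-integral expansion applies — and that the proposition's $O$-term is dominated by each term of that expansion. Once that is settled, the remaining assertions are a direct transcription, under the substitution $\log x\mapsto(s+1)\log x$, of the $\pi(x)$-case results already established in \cite{ell}.
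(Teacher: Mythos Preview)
Your proposal is correct and follows essentially the same approach as the paper: the paper simply says the theorem follows from the preceding proposition together with \cite[Theorem 1.1, Lemma 2.1, and Corollary 3.1]{ell}, and your argument is a detailed unpacking of exactly that deduction via the substitution $z=(s+1)\log x$ and the exponential-distribution moments $\mu_k=k!$. Your explicit check that the $E_1$ expansion is valid in the relevant sector and that the $O$-term is dominated termwise is a welcome addition that the paper leaves implicit.
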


Note that, since $$\sum_{n \leq x} n^{s} = \frac{x^{s+1}}{s+1} + O(x^s) \ (x \to \infty)$$ for all $s \in \CC$ with $\operatorname{Re}(s) > -1$, the asymptotic continued fraction expansions in the theorem can be re-expressed as
 \begin{align*}
\frac{\sum_{p \leq x} p^s}{\sum_{n \leq x} n^s} \sim \cfrac{\frac{1}{\log x}}{1 \,-} \  \cfrac{\frac{1}{(s+1)\log x}}{1 \,-} \  \cfrac{\frac{1}{(s+1)\log x}}{1 \,-}\  \cfrac{\frac{2}{(s+1)\log x}}{1 \,-}\  \cfrac{\frac{2}{(s+1)\log x}}{1 \,-} \  \cfrac{\frac{3}{(s+1)\log x}}{1 \,-}\  \cfrac{\frac{3}{(s+1)\log x}}{1 \,-} \ \cdots \ (x \to \infty).
\end{align*}
and
\begin{align*}
\frac{\sum_{p \leq x} p^s}{\sum_{n \leq x} n^s} \sim \cfrac{s+1}{(s+1)\log x-1 \,-} \  \cfrac{1}{(s+1)\log x-3 \,-} \  \cfrac{4}{{(s+1)}\log x-5 \,-}\  \cfrac{9}{{(s+1)}\log x-7 \,-} \  \cdots  \ (x \to \infty).
\end{align*}

For the boundary case $s = -1$, note that
\begin{align}\label{oneover}
\sum_{p \leq x}\frac{1}{p} = M+ \log \log x + O ((\log x)^t) \ (x \to \infty)
\end{align}
for all $t \in \RR$, where
\begin{align*}
M = \lim_{x \to \infty} \left(\sum_{p \leq x} \frac{1}{p} - \log \log x\right) = 0.2614972128476427837554\ldots
\end{align*}
is the {\bf Meissel--Mertens constant}.
 In Section 2, we also noted that the uniform distribution on $[-1,0]$ has Cauchy transform $\log(1+1/z)$ with expansions (\ref{logas1}) and (\ref{logas}).  Using this, we obtain the following.

\begin{proposition}
For all real numbers $a > 1$, one has the asymptotic continued fraction expansions
$$\sum_{a^x < p \leq a^{x+1}}  \frac{1}{p} \, \sim \,  \cfrac{\frac{1}{x}}{1 \,+}\  \cfrac{\frac{1}{x}}{2 \,+}\  \cfrac{\frac{1}{x}}{3 \,+}\   \cfrac{\frac{4}{x}}{4 \,+}\  \cfrac{\frac{4}{x}}{5 \,+} \  \cfrac{\frac{9}{x}}{6 \,+}\  \cfrac{\frac{9}{x}}{7 \,+}  \ \cfrac{\frac{16}{x}}{8 \,+}  \ \cfrac{\frac{16}{x}}{9 \,+}  \ \cdots \ (x \to \infty)$$
and
$$\sum_{x < p \leq ax} \frac{1}{p} \, \sim \,  \cfrac{\frac{1}{\log_a x}}{1 \,+}\  \cfrac{\frac{1}{\log_a x}}{2 \,+}\  \cfrac{\frac{1}{\log_a x}}{3 \,+}\   \cfrac{\frac{4}{\log_a x}}{4 \,+}\  \cfrac{\frac{4}{\log_a x}}{5 \,+} \  \cfrac{\frac{9}{\log_a x}}{6 \,+}\  \cfrac{\frac{9}{\log_a x}}{7 \,+}  \ \cfrac{\frac{16}{\log_a x}}{8 \,+}  \ \cfrac{\frac{16}{\log_a x}}{9 \,+}  \ \cdots \ (x \to \infty).$$
Moreover, for all $x > 0$, the first continued fraction converges to  $\log \left( 1+\frac{1}{x}\right)$, while, for all $x > 1$, the second continued fraction converges to $\log \left(1+\frac{1}{\log_a x}\right) = \log \log(ax)-\log \log x$.
\end{proposition}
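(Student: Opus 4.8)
The plan is to reduce both expansions to the continued fraction (\ref{logas1})--(\ref{logas}) for $\log\bigl(1+\tfrac1z\bigr)$, the Stieltjes transform ${\mathcal S}_\mu(z)$ of the uniform distribution $\mu$ on $[-1,0]$, using Mertens' theorem with error term as recorded in (\ref{oneover}). First I would show that, for any fixed real $a>1$,
$$\sum_{a^x < p \leq a^{x+1}} \frac1p = \log\left(1+\frac1x\right) + O\bigl(x^{-t}\bigr) \ (x \to \infty)$$
for every $t>0$, hence $O(x^{-k})$ for every integer $k$. Indeed, write the sum as $\sum_{p \leq a^{x+1}}\tfrac1p - \sum_{p \leq a^{x}}\tfrac1p$ and apply (\ref{oneover}): the Meissel--Mertens constants cancel, the main terms combine to $\log\log(a^{x+1}) - \log\log(a^{x}) = \log((x+1)\log a) - \log(x\log a) = \log(1+1/x)$, and the error is $O\bigl(((x+1)\log a)^{-t}\bigr) + O\bigl((x\log a)^{-t}\bigr) = O(x^{-t})$; since $t$ is arbitrary, this is $O(x^{-k})$ for all $k$.

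Next, since $\mu$ is a finite measure with infinite compact support, ${\mathcal S}_\mu(z) = \log(1+1/z)$ is analytic at $\infty$ and has the Stieltjes continued fraction expansion displayed in (\ref{logas1}), valid for $z \in \CC\setminus[-1,0]$, in particular for all $x>0$. By the remarks following Proposition~\ref{aprop} for the compact-support case, together with the Stieltjes analogue of that proposition (the consequence of \cite[Theorems 2.6 and 2.8]{ell} mentioned there), the estimate just established --- that $\sum_{a^x < p \leq a^{x+1}}\tfrac1p$ agrees with ${\mathcal S}_\mu(x)$ modulo $O(x^{-k})$ for all $k$ --- is equivalent to the assertion that $\sum_{a^x < p \leq a^{x+1}}\tfrac1p$ has the asymptotic Stieltjes continued fraction expansion of ${\mathcal S}_\mu(x)$ over $\RR_{>0}$. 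This is exactly the first displayed continued fraction of the proposition (equation (\ref{logas}) with $z$ renamed $x$), and (\ref{logas1}) records that it converges to $\log(1+1/x)$ for all $x>0$.

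Finally, I would deduce the second expansion from the first by the change of variable $x \mapsto \log_a x$. Since $\log_a x \to \infty$ as $x\to\infty$, substituting $y = \log_a x$ into the first expansion (so that $a^y = x$ and $a^{y+1} = ax$) yields
$$\sum_{x < p \leq ax} \frac1p \sim \cfrac{\frac{1}{\log_a x}}{1\,+}\ \cfrac{\frac{1}{\log_a x}}{2\,+}\ \cfrac{\frac{1}{\log_a x}}{3\,+}\ \cdots \ (x\to\infty),$$
and for $x>1$ (so $\log_a x > 0 \notin [-1,0]$) this continued fraction converges to $\log(1+1/\log_a x)$. The claimed identity then follows from $\log(1+1/\log_a x) = \log\bigl((\log_a x + 1)/\log_a x\bigr) = \log\log_a(ax) - \log\log_a x = \log\log(ax) - \log\log x$, using $\log_a t = (\log t)/(\log a)$ and $\log_a(ax) = 1 + \log_a x$.

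The only genuinely delicate point is handling the error term in Mertens' theorem carefully enough that the difference of the two partial sums is $O(x^{-k})$ for \emph{every} $k$; this is what makes the resulting expansion an asymptotic continued fraction expansion in the sense of \cite{ell} (and, the measure being compactly supported, forces it to converge as well). Everything else is bookkeeping and an appeal to the general machinery recalled in Section 1.2.
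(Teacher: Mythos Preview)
Your proof is correct and follows essentially the same approach as the paper: both use Mertens' theorem with error term (\ref{oneover}) to show that the prime sum agrees with $\log(1+1/z)$ up to an error smaller than any power of the relevant variable, and then invoke the continued fraction (\ref{logas1})--(\ref{logas}) for the Stieltjes transform of the uniform distribution on $[-1,0]$ together with the compact-support machinery of Section~1.2. The only cosmetic difference is the order: the paper first proves the expansion for $\sum_{x<p\leq ax}\frac1p$ in the variable $\log_a x$ (by setting $z=\log x$, $t=\log a$ so that $t/z=1/\log_a x$) and leaves the first expansion implicit, whereas you prove the expansion for $\sum_{a^x<p\leq a^{x+1}}\frac1p$ in the variable $x$ first and then substitute $x\mapsto\log_a x$; either direction is an immediate consequence of the other.
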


\begin{proof}
Let $t = \log a > 0$.  By (\ref{logas}), one has the asymptotic expansion
\begin{align*}
\log (z+t)-\log z = \log \left(1+\frac{t}{z}\right) \, \sim \, \cfrac{\frac{t}{z}}{1 \,+}\  \cfrac{\frac{t}{z}}{2 \,+}\  \cfrac{\frac{t}{z}}{3 \,+}\   \cfrac{\frac{4t}{z}}{4 \,+}\  \cfrac{\frac{4t}{z}}{5 \,+} \  \cfrac{\frac{9t}{z}}{6 \,+} \  \cfrac{\frac{9t}{z}}{7 \,+} \  \cfrac{\frac{16t}{z}}{8 \,+} \  \cfrac{\frac{16t}{z}}{9 \,+}  \ \cdots \ (z \to \infty),
\end{align*}
and therefore, letting $z =  \log x$, one has the asymptotic expansion
\begin{align*}
\log \log(ax)-\log \log x \,   \sim \, \cfrac{\frac{1}{\log_a x}}{1 \,+}\  \cfrac{\frac{1}{\log_a x}}{2 \,+}\  \cfrac{\frac{1}{\log_a x}}{3 \,+}\   \cfrac{\frac{4}{\log_a x}}{4 \,+}\  \cfrac{\frac{4}{\log_a x}}{5 \,+} \  \cfrac{\frac{9}{\log_a x}}{6 \,+}\  \cfrac{\frac{9}{\log_a x}}{7 \,+}  \ \cfrac{\frac{16}{\log_a x}}{8 \,+}  \ \cfrac{\frac{16}{\log_a x}}{9 \,+}  \ \cdots \ (x \to \infty).
\end{align*}
By (\ref{oneover}), one has
$$\sum_{x < p \leq ax}\frac{1}{p}   = \sum_{p \leq ax}\frac{1}{p} - \sum_{p \leq x}\frac{1}{p} = \log \log(ax) -\log \log x + o ((\log x)^t) \  (x \to \infty)$$ 
for all $t \in \RR$.  Therefore the function $\sum_{x < p \leq ax}\frac{1}{p}$ has the same asymptotic expansion as $\log \log(ax)-\log \log x$.
\end{proof}

\begin{corollary}
For all real numbers $a > b > 0$, one has the asymptotic continued fraction expansion
$$\sum_{bx < p \leq ax}\frac{1}{p} \, \sim \,  \cfrac{\frac{1}{\log_{a/b} (bx)}}{1 \,+}\  \cfrac{\frac{1}{\log_{a/b}(bx)}}{2 \,+}\  \cfrac{\frac{1}{\log_{a/b}(bx)}}{3 \,+}\   \cfrac{\frac{4}{\log_{a/b}(bx)}}{4 \,+}\  \cfrac{\frac{4}{\log_{a/b}(bx)}}{5 \,+} \  \cfrac{\frac{9}{\log_{a/b}(bx)}}{6 \,+}\  \cfrac{\frac{9}{\log_{a/b}(bx)}}{7 \,+}   \ \cdots \ (x \to \infty).$$
\end{corollary}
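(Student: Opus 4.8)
The plan is to derive this directly from the preceding Proposition by the affine substitution $a \mapsto a/b$ and $x \mapsto bx$. Since $a > b > 0$, we have $a/b > 1$, so the Proposition applies with base $a/b$: rewriting its second asymptotic continued fraction expansion in a fresh variable $y$ gives an expansion of $\sum_{y < p \leq (a/b)y}\tfrac1p$ of the same shape as the one in the statement, but with $\log_{a/b} y$ throughout, valid as $y \to \infty$. Now put $y = bx$. Since $(a/b)\cdot bx = ax$, we have $\sum_{bx < p \leq (a/b)(bx)}\tfrac1p = \sum_{bx < p \leq ax}\tfrac1p$, and each $\log_{a/b} y$ becomes $\log_{a/b}(bx)$, which is exactly the asserted expansion. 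Finally, because $b > 0$ is fixed, $bx \to \infty$ is equivalent to $x \to \infty$, so the expansion holds as $x \to \infty$.

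The only point that deserves a word of justification is that the notion of an \textit{asymptotic continued fraction expansion} is unaffected by relabeling the argument $y \mapsto bx$. This is routine: the relevant asymptotic sequence $\{(\log_{a/b}(bx))^{-n}\}_{n \geq 1}$ is an asymptotic sequence as $x \to \infty$ because $\log_{a/b}(bx) \to \infty$; and for each $k$ the $k$th approximant $w_k$ of the continued fraction is a fixed rational function, so its Laurent expansion at $\infty$ in the variable $\log_{a/b}(bx)$ agrees term for term with its Laurent expansion in the variable $\log_{a/b} y$. Hence the $O$-estimates defining the expansion transfer verbatim.

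For completeness, one can instead reprove the corollary from scratch, in exact parallel with the proof of the Proposition: by (\ref{oneover}), for every $t \in \RR$ one has $\sum_{bx < p \leq ax}\tfrac1p = \sum_{p \leq ax}\tfrac1p - \sum_{p \leq bx}\tfrac1p = \log\log(ax) - \log\log(bx) + o((\log x)^t)$ as $x \to \infty$, while the elementary identity $\log\log(ax) - \log\log(bx) = \log\bigl(1 + \tfrac{\log(a/b)}{\log(bx)}\bigr) = \log\bigl(1 + \tfrac{1}{\log_{a/b}(bx)}\bigr)$ reduces the claim to the expansion (\ref{logas}) with $z = \log(bx)$ and multiplier $\log(a/b)$. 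Either way, there is essentially no obstacle: the substantive content lies in the Proposition, and the corollary merely records the substitution $a \mapsto a/b$, $x \mapsto bx$.
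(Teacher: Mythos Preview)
Your proposal is correct and takes essentially the same approach as the paper: the corollary is stated without proof there because it follows immediately from the second expansion of the preceding Proposition by replacing the base $a$ with $a/b>1$ and substituting $y=bx$ for $x$, exactly as you do. Your additional remarks justifying that the asymptotic expansion survives the relabeling $y\mapsto bx$, and your alternative direct argument via (\ref{oneover}) and (\ref{logas}), are accurate but go beyond what the paper records.
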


It is clear that the expansion (\ref{logas1}) can be rewritten in the form
$$\log \left(1+\frac{1}{z}\right) = \cfrac{\frac{1}{z}}{1 \,+}\  \cfrac{\frac{1}{z}}{2 \,+}\  \cfrac{\frac{1}{z}}{3 \,+}\   \cfrac{\frac{2}{z}}{2 \,+}\  \cfrac{\frac{2}{z}}{5 \,+} \  \cfrac{\frac{3}{z}}{2 \,+} \  \cfrac{\frac{3}{z}}{7 \,+} \  \cfrac{\frac{4}{z}}{2 \,+} \  \cfrac{\frac{4}{z}}{9 \,+} \ \cdots, \quad z \in \CC\backslash [-1,0].$$
Thus we also have the following.

\begin{corollary}
For all real numbers $a > 1$, one has the asymptotic continued fraction expansions
$$\sum_{a^x < p \leq a^{x+1}} \frac{1}{p} \, \sim \,  \cfrac{\frac{1}{x}}{1 \,+}\  \cfrac{\frac{1}{x}}{2 \,+}\  \cfrac{\frac{1}{x}}{3 \,+}\   \cfrac{\frac{2}{x}}{2 \,+}\  \cfrac{\frac{2}{x}}{5 \,+} \  \cfrac{\frac{3}{x}}{2 \,+}\  \cfrac{\frac{3}{x}}{7 \,+}  \ \cfrac{\frac{4}{x}}{2 \,+}  \ \cfrac{\frac{4}{x}}{9 \,+}  \ \cdots \ (x \to \infty)$$
and
$$\sum_{x < p \leq ax}\frac{1}{p} \, \sim \,  \cfrac{\frac{1}{\log_a x}}{1 \,+}\  \cfrac{\frac{1}{\log_a x}}{2 \,+}\  \cfrac{\frac{1}{\log_a x}}{3 \,+}\   \cfrac{\frac{2}{\log_a x}}{2 \,+}\  \cfrac{\frac{2}{\log_a x}}{5 \,+} \  \cfrac{\frac{3}{\log_a x}}{2 \,+}\  \cfrac{\frac{3}{\log_a x}}{7 \,+}  \ \cfrac{\frac{4}{\log_a x}}{2 \,+}  \ \cfrac{\frac{4}{\log_a x}}{9 \,+}  \ \cdots \ (x \to \infty).$$
\end{corollary}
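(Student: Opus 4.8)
\section*{Proof proposal}

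The plan is to derive this corollary from the proposition preceding it --- the one giving the asymptotic continued fraction expansions of $\sum_{a^x < p \leq a^{x+1}} \frac{1}{p}$ and $\sum_{x < p \leq ax} \frac{1}{p}$ in terms of the first form of the continued fraction for $\log(1+1/z)$ --- by an {\it equivalence transformation} of continued fractions, which is exactly the manipulation announced by the sentence ``It is clear that the expansion (\ref{logas1}) can be rewritten$\ldots$'' that precedes the statement. First I would recall the transformation: given any sequence $\{c_n\}_{n \geq 1}$ of nonzero scalars, and setting $c_0 = 1$, the continued fractions
$$\cfrac{a_1}{b_1 \,+} \  \cfrac{a_2}{b_2 \,+} \  \cfrac{a_3}{b_3 \,+} \  \cdots \quad \text{and} \quad \cfrac{c_0 c_1 a_1}{c_1 b_1 \,+} \  \cfrac{c_1 c_2 a_2}{c_2 b_2 \,+} \  \cfrac{c_2 c_3 a_3}{c_3 b_3 \,+} \  \cdots$$
have, term for term, identical sequences of approximants (and hence the same value whenever either converges). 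The point I would stress is that an asymptotic continued fraction expansion of a function, in the sense of \cite{ell} underlying Proposition \ref{aprop} and its Stieltjes analogue, is a statement purely about the asymptotic orders of the approximants $w_n$; since an equivalence transformation fixes each $w_n$ individually, with no reindexing, it carries any asymptotic continued fraction expansion of a function to another asymptotic continued fraction expansion of the same function.

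Next I would pin down the scalars. Comparing the rewritten continued fraction displayed just before the corollary with the one in (\ref{logas1}), the odd-indexed partial denominators are to be left alone while each even-indexed partial denominator $2k$ must be replaced by $2$; this forces $c_n = 1$ for $n$ odd and $c_n = 2/n$ for $n$ even, and a one-line check shows that with these scalars the partial numerators $c_{n-1} c_n a_n$ become exactly $\tfrac{1}{z}, \tfrac{1}{z}, \tfrac{1}{z}, \tfrac{2}{z}, \tfrac{2}{z}, \tfrac{3}{z}, \tfrac{3}{z}, \ldots$ as claimed. This both justifies the ``it is clear'' rewriting of (\ref{logas1}) and, applied verbatim to the asymptotic expansion (\ref{logas}), turns it into the asymptotic version of the rewritten continued fraction.

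Finally I would invoke the preceding proposition. It already establishes, with $z$ specialized to $x$ and to $\log_a x$, that $\sum_{a^x < p \leq a^{x+1}} \frac{1}{p}$ and $\sum_{x < p \leq ax} \frac{1}{p}$ have the asymptotic continued fraction expansions coming from the first form of the continued fractions for $\log(1+1/x)$ and $\log(1+1/\log_a x)$; applying to those conclusions the equivalence transformation of the first two paragraphs, with the same specializations of $z$, yields precisely the two displays of the corollary, the convergence properties transferring since an equivalence transformation preserves the limit. I do not expect a genuine obstacle here: the only thing needing care --- and the one place a careless argument could go wrong --- is the observation in the first paragraph that the transformation preserves the {\it indexing} of the approximants, so that the order-of-approximation bookkeeping built into the definition of an asymptotic continued fraction expansion is not disturbed; this is immediate from the standard recursions for numerators and denominators, but it is worth checking explicitly against the definition used in \cite{ell}.
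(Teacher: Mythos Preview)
Your proposal is correct and follows essentially the same approach as the paper, which is extremely terse at this point: the paper simply remarks that ``It is clear that the expansion (\ref{logas1}) can be rewritten'' in the second form and then says ``Thus we also have the following,'' leaving the equivalence-transformation argument and the observation that approximants are preserved entirely implicit. Your write-up supplies exactly the details (the choice of scalars $c_n$ and the preservation of approximant indexing) that the paper's ``It is clear'' and ``Thus'' presuppose.
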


It is noteworthy that the asymptotic expansion of the function $\sum_{a^x < p \leq a^{x+1}} \frac{1}{p}$ with respect to the asymptotic sequence $\left\{\frac{1}{x^n} \right\}$ does not depend on $a$.

Now, the uniform distribution on $[-1,1]$ has Cauchy transform
\begin{align}\label{uni}\log \left(\frac{z+1}{z-1}\right) =  \cfrac{2}{z \,-}\  \cfrac{1}{3z \,-}\  \cfrac{4}{5z \,-}\   \cfrac{9}{7z \,-}\  \cfrac{16}{9z  \,-} \  \cfrac{25}{11z  \,-}\  \cfrac{36}{13z  \,-} \ \cdots, \quad z \in \CC\backslash [-1,1].
\end{align}
From this we obtain the following.

\begin{proposition}
For all real numbers $a > 1$, one has the asymptotic continued fraction expansion
$$\sum_{a^{-1}x < p \leq ax} \frac{1}{p}\, \sim \,  \cfrac{2}{\log_a x \,-}\  \cfrac{1}{3\log_a x \,-}\  \cfrac{4}{5\log_a x \,-}\   \cfrac{9}{7\log_a x \,-}\  \cfrac{16}{9\log_a x  \,-} \  \cfrac{25}{11\log_a x  \,-} \ \cdots \ (x \to \infty).$$
\end{proposition}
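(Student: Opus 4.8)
The plan is to transcribe the proof of the preceding proposition, replacing the uniform distribution on $[-1,0]$ by the uniform distribution $\mu$ on $[-1,1]$, whose Stieltjes transform $\SS_\mu(z) = \int_{-1}^{1}\frac{du}{z-u} = \log\frac{z+1}{z-1}$ and continued fraction expansion (\ref{uni}) are the relevant inputs. Write $t = \log a > 0$. The moments of $\mu$ are $m_n(\mu) = \int_{-1}^{1} u^n\,du$, which equal $\frac{2}{n+1}$ for $n$ even and $0$ for $n$ odd, so $\SS_\mu(z) = \sum_{k=0}^\infty \frac{2}{(2k+1)z^{2k+1}}$ for $|z| > 1$. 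Since $\mu$ is a finite measure with infinite compact support, the discussion following Proposition~\ref{aprop} applies: the asymptotic continued fraction expansion
$$\SS_\mu(z) \ \sim \ \cfrac{2}{z \,-}\ \cfrac{1}{3z \,-}\ \cfrac{4}{5z \,-}\ \cfrac{9}{7z \,-}\ \cdots \ (z\to\infty)$$
holds over all of $\CC$, and a function $g(z)$ on an unbounded subset $\XX$ of $\CC$ has this same asymptotic continued fraction expansion over $\XX$ if and only if $g(z) = \SS_\mu(z) + O(z^{-k}) \ (z\to\infty)_\XX$ for every integer $k$.

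The second ingredient is an elementary identity. Putting $x = a^z$, so that $ax = a^{z+1}$ and $a^{-1}x = a^{z-1}$, one has $\log\log(ax) - \log\log(a^{-1}x) = \log\bigl((z+1)\log a\bigr) - \log\bigl((z-1)\log a\bigr) = \log\frac{z+1}{z-1} = \SS_\mu(z)$ for $z > 1$. On the analytic side, (\ref{oneover}) gives $\sum_{p\le y}\frac1p = M + \log\log y + O((\log y)^r)$ as $y\to\infty$ for every $r\in\RR$; subtracting the instances $y = ax$ and $y = a^{-1}x$ (the constant $M$ cancels) yields
$$\sum_{a^{-1}x < p \le ax}\frac1p \ = \ \log\log(ax) - \log\log(a^{-1}x) + O\bigl((\log x)^r\bigr) \ (x\to\infty)$$
for every $r\in\RR$, and taking $r$ to be an arbitrary negative integer makes the error term $O((\log_a x)^{-k})$ for every integer $k$.

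Combining the two displays, the function $g(z) := \sum_{a^{z-1} < p \le a^{z+1}} \frac1p$ satisfies $g(z) = \SS_\mu(z) + O(z^{-k}) \ (z\to\infty)_{\RR_{>0}}$ for every integer $k$, hence by the equivalence quoted in the first paragraph it has the asymptotic continued fraction expansion $g(z) \sim \cfrac{2}{z \,-}\ \cfrac{1}{3z \,-}\ \cfrac{4}{5z \,-}\ \cdots \ (z\to\infty)$. Substituting $z = \log_a x$, which tends to $\infty$ through $\RR_{>0}$ as $x\to\infty$, gives the asserted expansion, understood with respect to the asymptotic sequence $\{(\log_a x)^{-n}\}$. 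One may add, as in the preceding proposition, that since $\log_a x > 1$ for $x > a$ and $\mu$ is supported on $[-1,1]$, the displayed continued fraction in fact converges, to $\log\frac{\log(ax)}{\log(a^{-1}x)}$, for every $x > a$.

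I do not expect a genuine obstacle: this is a direct adaptation of the previous argument. The only step needing a little care is the passage from the \textit{exact} continued fraction identity (\ref{uni}) on $\CC\setminus[-1,1]$ to an \textit{asymptotic} continued fraction expansion along the positive real ray, under the reparametrization $z = \log_a x$ — but this is precisely what the ``finite measure with infinite compact support'' paragraph of Section~1.2 provides, together with the routine observation (already used in the proof of the preceding proposition, via $z = \log x$) that composing an asymptotic expansion in $z$ with the unbounded substitution $z = \log_a x$ produces an asymptotic expansion in $x$ relative to $\{(\log_a x)^{-n}\}$.
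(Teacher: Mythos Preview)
Your proof is correct and follows exactly the approach the paper intends: the paper simply writes ``From this we obtain the following'' after displaying (\ref{uni}), meaning that the argument of the preceding proposition is to be repeated with the uniform distribution on $[-1,1]$ in place of that on $[-1,0]$, which is precisely what you have written out. Your identification $\log\log(ax)-\log\log(a^{-1}x)=\log\frac{z+1}{z-1}$ under $z=\log_a x$ and the appeal to (\ref{oneover}) mirror the paper's proof of the earlier proposition line for line.
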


\begin{corollary}
For all real numbers $a > 1$, one has the asymptotic continued fraction expansions
$$\sum_{a^x < p \leq a^{x+1}} \frac{1}{p}\, \sim \,  \cfrac{2}{2x+1 \,-}\  \cfrac{1}{6x+3 \,-}\  \cfrac{4}{10x +5 \,-}\   \cfrac{9}{14x +7\,-}\  \cfrac{16}{18x +9 \,-} \ \cdots \ (x \to \infty)$$
and
$$\sum_{x < p \leq ax} \frac{1}{p} \, \sim \,  \cfrac{2}{2\log_a x+1 \,-}\  \cfrac{1}{6\log_a x+3 \,-}\  \cfrac{4}{10\log_a x +5 \,-}\   \cfrac{9}{14\log_a x +7\,-} \ \cdots \ (x \to \infty).$$
\end{corollary}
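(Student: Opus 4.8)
The plan is to obtain both expansions from the preceding Proposition on $\sum_{a^{-1}x < p \leq ax}\frac1p$ by elementary changes of variable, using that $\sqrt{a} > 1$ whenever $a > 1$, so that the preceding Proposition applies verbatim with $\sqrt a$ in place of $a$.

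First I would treat $\sum_{a^x < p \leq a^{x+1}}\frac1p$. Setting $X = a^{x+1/2}$, one has $a^x = (\sqrt a)^{-1}X$ and $a^{x+1} = \sqrt a\, X$, so $\sum_{a^x < p \leq a^{x+1}}\frac1p = \sum_{(\sqrt a)^{-1}X < p \leq \sqrt a X}\frac1p$. Applying the preceding Proposition with $\sqrt a$ in place of $a$ gives the asymptotic continued fraction expansion of this sum in the variable $\log_{\sqrt a}X$, and $\log_{\sqrt a}X = \log_{\sqrt a} a^{x+1/2} = 2\left(x + \frac12\right) = 2x+1$. Substituting $z = 2x+1$ into the continued fraction $\cfrac{2}{z\,-}\cfrac{1}{3z\,-}\cfrac{4}{5z\,-}\cfrac{9}{7z\,-}\cdots$ of $(\ref{uni})$ and simplifying the partial denominators ($3(2x+1) = 6x+3$, $5(2x+1) = 10x+5$, $7(2x+1) = 14x+7$, and so on) yields the first displayed expansion; since $X \to \infty$ precisely when $x \to \infty$, the expansion $(X\to\infty)$ is an expansion $(x\to\infty)$.

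For $\sum_{x < p \leq ax}\frac1p$ the argument is the same with $X = \sqrt a\, x$: then $x = (\sqrt a)^{-1}X$ and $ax = \sqrt a\, X$, so $\sum_{x < p \leq ax}\frac1p = \sum_{(\sqrt a)^{-1}X < p \leq \sqrt a X}\frac1p$, and $\log_{\sqrt a}X = \log_{\sqrt a}(\sqrt a\, x) = 1 + \log_{\sqrt a}x = 1 + 2\log_a x$. Substituting $z = 1 + 2\log_a x$ into the same continued fraction and simplifying ($3(1 + 2\log_a x) = 6\log_a x + 3$, $5(1+2\log_a x) = 10\log_a x + 5$, etc.) gives the second displayed expansion.

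There is no substantial obstacle here, since the corollary is merely a change of variables away from the preceding Proposition; the only point needing a word of care is that an asymptotic continued fraction expansion is preserved under these substitutions. This holds because each substitution $X = a^{x+1/2}$ (resp. $X = \sqrt a\, x$) is a monotone bijection of $\RR_{>0}$ onto an unbounded subset of $\RR_{>0}$ carrying $x \to \infty$ to $X \to \infty$, and because $\log_{\sqrt a}X$ is an affine function of $x$ (resp. of $\log_a x$), so that the approximants remain rational functions of $x$ (resp. of $\log_a x$) with unchanged orders of approximation. As a consistency check one may evaluate the continued fractions via $(\ref{uni})$: the first converges to $\log\frac{(2x+1)+1}{(2x+1)-1} = \log\left(1 + \frac1x\right)$ for $x > 0$, and the second to $\log\left(1 + \frac{1}{\log_a x}\right) = \log\log(ax) - \log\log x$ for $x > 1$, in agreement with the earlier expansions.
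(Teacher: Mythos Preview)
Your proof is correct and takes essentially the same approach the paper intends: the corollary is stated without proof immediately after the Proposition on $\sum_{a^{-1}x<p\leq ax}\frac1p$, and the implied derivation is precisely the change of variables you carry out (replacing $a$ by $\sqrt a$ and substituting $X=a^{x+1/2}$, resp.\ $X=\sqrt a\,x$, so that $\log_{\sqrt a}X=2x+1$, resp.\ $2\log_a x+1$). Your remarks on why the asymptotic expansion survives the substitution, and your consistency check via (\ref{uni}), are apt and match the paper's later observation that substituting $z=2x+1$ recovers $\log(1+1/x)$.
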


\subsection{Functions related to Mertens' theorems}

Like the Meissel--Mertens constant $M$, the constant
\begin{align*}
H  & = -\sum_p \left(\frac{1}{p} +\log\left(1- \frac{1}{p} \right) \right)  \\
 & =  \sum_p \left(\frac{1}{2p^2} + \frac{1}{3p^3} + \frac{1}{4p^4} + \cdots \right) \\
 & =  \sum_{n = 2}^\infty \frac{P(n)}{n}  \\
 & = 0.3157184519\ldots,
\end{align*}
where $$P(s) = \sum_p \frac{1}{p^s}, \quad \operatorname{Re}(s) > 1$$
is the {\bf prime zeta function},  encodes information about the primes.  Since
$$\log \zeta(s)  = \sum_{n = 1}^\infty  \frac{P(ns)}{n}, \quad \operatorname{Re}(s) > 1$$ 
(which is an immediate consequence of the Euler product representation of $\zeta(s)$), one has
$$H = \lim_{x \to 1^+} \left( \log \zeta(x) - P(x) \right)  =  \lim_{x \rightarrow 1^+} \left( \log \frac{1}{x-1} - P(x) \right).$$
The following estimates are well known for $s = 1$.

\begin{proposition}\label{mertensprop}
For all $s \in \RR$ not equal to $0$ or a prime, and for all $t \in \RR$, one has the following.
\begin{enumerate}
\item  $\displaystyle -\frac{1}{s}\sum_{p \leq x}\log \left(1-\frac{s}{p}\right) = G(s) + \log \log x + o ((\log x)^t) \  (x \to \infty)$, where $$G(s) = -\lim_{x \to \infty} \left(\frac{1}{s}\sum_{p \leq x}\log \left(1-\frac{s}{p}\right) +\log \log x\right),$$ and where $G(1) = \gamma$.
\item $\displaystyle - \sum_{p \leq x} \left(\frac{1}{p} +\frac{1}{s}\log \left(1-\frac{s}{p}\right) \right) = sH(s) + o ((\log x)^t) \  (x \to \infty)$, where 
$$H(s)   = -\frac{1}{s}\sum_p \left(\frac{1}{p} +\frac{1}{s}\log\left(1- \frac{s}{p} \right) \right),$$
and where $H(1) = H$.
\item $\displaystyle  \prod_{p \leq x}\left(1-\frac{s}{p}\right)^{-1/s}  =  e^{G(s)}\log x +  o ((\log x)^t) \  (x \to \infty)$.
\item $\displaystyle  \prod_{p \leq x}\left(1-\frac{s}{p}\right)^{-1}  =  e^{sG(s)}(\log x)^s +  o ((\log x)^t) \  (x \to \infty)$.
\end{enumerate}
\end{proposition}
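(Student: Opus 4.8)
The plan is to reduce everything to the known estimate (\ref{oneover}) for $\sum_{p \le x} 1/p$ together with the absolute convergence of a ``correction'' series. First I would record the elementary expansion
$$\frac{1}{p} + \frac{1}{s}\log\left(1 - \frac{s}{p}\right) = -\sum_{k \ge 2} \frac{s^{k-1}}{k p^k} = -\frac{s}{2p^2} + O(1/p^{3}) \quad (p \to \infty),$$
valid for every prime $p > |s|$; for the finitely many primes $p \le |s|$ one simply keeps the (bounded) terms as they stand, fixing once and for all a branch of the logarithm on the negative reals (this only shifts the constants $G(s)$ and $H(s)$ by a fixed amount and is invisible to the asymptotics). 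The displayed bound shows that $\sum_p \bigl(1/p + \tfrac1s\log(1-s/p)\bigr)$ converges absolutely, which is exactly the content needed for part (2): writing the full sum minus the tail over $p > x$, the tail is $O(1/x)$, hence $o((\log x)^t)$ for every $t$, and $H(s) := -\tfrac1s\sum_p(1/p + \tfrac1s\log(1-s/p))$ is the resulting constant; specializing $s = 1$ recovers the definition of $H$.

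For part (1) I would subtract: $-\tfrac1s\sum_{p\le x}\log(1-s/p) = \sum_{p\le x}\tfrac1p - \sum_{p\le x}\bigl(\tfrac1p + \tfrac1s\log(1-s/p)\bigr)$. By (\ref{oneover}) the first sum is $M + \log\log x + O((\log x)^t)$ for all $t$, and by part (2) the second is $-sH(s) + O(1/x)$; hence
$$-\frac1s\sum_{p\le x}\log\left(1-\frac sp\right) = \bigl(M + sH(s)\bigr) + \log\log x + o((\log x)^t)$$
for every $t$, which is (1) with $G(s) = M + sH(s)$; in particular the limit defining $G(s)$ exists and equals this value. For $s = 1$ one then has $G(1) = M + H = \gamma$, the last equality being Mertens' third theorem (equivalently, the known identity $M = \gamma + \sum_p(\log(1-1/p) + 1/p)$).

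Parts (3) and (4) follow by exponentiating (1). Since the error term $\varepsilon(x)$ in (1) satisfies $\varepsilon(x) = o((\log x)^t)$ for all $t$, and in particular $\varepsilon(x) \to 0$, we have $e^{\varepsilon(x)} = 1 + O(\varepsilon(x))$, so
$$\prod_{p\le x}\left(1-\frac sp\right)^{-1/s} = e^{G(s)}(\log x)\,e^{\varepsilon(x)} = e^{G(s)}\log x + O\bigl((\log x)\varepsilon(x)\bigr) = e^{G(s)}\log x + o((\log x)^t)$$
for every $t$, giving (3); multiplying (1) through by $s$ before exponentiating gives (4) in the same way, with the factor $(\log x)^s$ absorbed into the arbitrary power of $\log x$ in the error. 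The only point requiring care — and it is cosmetic rather than substantive — is keeping the quantifier ``for all $t$'' straight through the exponentiation (an error that beats every power of $\log x$ still does so after multiplication by a fixed power of $\log x$), together with the convention for $\log(1-s/p)$ at the finitely many primes $p \le |s|$; neither point affects the argument, since those primes contribute only to the constants $G(s)$ and $H(s)$.
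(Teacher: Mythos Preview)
Your argument is correct, but it takes a genuinely different route from the paper's proof. The paper attacks (1) directly: it applies Landau's theorem \cite[p.\ 201--203]{land} to the function $F(u)=\log(1-s/u)$ to obtain
\[
\sum_{p\le x}\log\!\left(1-\frac{s}{p}\right)=A(s)+\int_N^x\frac{\log(1-s/t)}{\log t}\,dt+O\bigl((\log x)^u\bigr),
\]
then expands the integrand as a power series in $s/t$ and integrates term by term to extract $-s\log\log x$ plus a constant, with the remaining terms bounded via $|\li(1/x)|<1/(x\log x)$. Parts (2)--(4) are then said to follow from (1).

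You reverse the order: you prove (2) first, by the elementary observation that $\tfrac1p+\tfrac1s\log(1-s/p)=O(1/p^2)$ makes the full sum absolutely convergent with tail $O(1/x)$, and then deduce (1) by adding this to the already-stated estimate (\ref{oneover}) for $\sum_{p\le x}1/p$. This is cleaner and avoids a second appeal to Landau's theorem, at the cost of relying on (\ref{oneover}) as a black box (which of course is itself proved by exactly the same Landau-type argument). Both approaches yield $G(s)=M+sH(s)$; the paper derives this identity afterward as a corollary, whereas in your argument it appears directly in the proof. Your handling of (3) and (4) by exponentiation matches what the paper intends by ``readily follow.''
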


\begin{proof}
We prove (1), from which the other statements readily follow.  Since $s$ is not zero or a prime, the sum $\sum_{p \leq x}\log \left(1-\frac{s}{p}\right)$ is finite for all $x > 0$.  Let $N = \max(2,\lfloor |s|\rfloor + 1)$.   From the series expansion
\begin{align}\label{logest} \log \left(1-\frac{s}{t}\right) = -\sum_{k = 1}^\infty \frac{s^k}{k t^k}, \quad |t|>|s| \end{align}
it follows that
\begin{align*} \log \left(1-\frac{s}{t}\right) = -\frac{s}{t} + O \left ( \frac{1}{t^2} \right) \ (t \to \infty). \end{align*}
It follows that the function $F(u) = \log \left(1-\frac{s}{u}\right)$ satisfies the three necessary hypotheses of Landau's theorem \cite[p.\ 201--203]{land}, and therefore one has
$$\sum_{p \leq x}\log \left(1-\frac{s}{p}\right) = A(s)+ \int_{N}^x \frac{\log \left(1-\frac{s}{t}\right)}{\log t} dt + O \left((\log x)^u \right)\ (x \to \infty)$$
for all $u \in \RR$, for some constant $A(s)$ depending on $s$.   Now, since $|t| > |s|$ for all $t \geq N$, from (\ref{logest}) it follows that
\begin{align*}
\int_N^x \frac{\log \left(1-\frac{s}{t}\right)}{\log t} dt   = B(s)-s\log \log x - \sum_{k = 1}^\infty \frac{s^{k+1}}{k+1}\li (x^{-k}) 
\end{align*}
for some constant $B(s)$ depending on $s$.  But also
$$0< - \li (1/x)  < \frac{1}{x \log x}, $$
for all $x> 1$
and therefore
$$\left|\sum_{k = 1}^\infty \frac{s^{k+1}}{k+1}{\li (x^{-k})}\right| \leq \sum_{k = 1}^\infty \frac{|s|^{k+1}}{k+1}{\li (x^{-k})} <  \sum_{k = 1}^\infty \frac{|s|^{k+1}}{k(k+1)x^k \log x} = O \left(\frac{1}{x \log x} \right) \ (x \to \infty)$$
for all $x > 1$.
Thus we have
\begin{align*}  \int_N^x \frac{\log \left(1-\frac{s}{t}\right)}{\log t} dt = B(s)-s\log \log x   +  O \left(\frac{1}{x \log x}\right)  \ (x \to \infty) \end{align*}
and therefore
\begin{align*}
\sum_{p \leq x}\log \left(1-\frac{s}{p}\right)  & = A(s)+ B(s) -s\log \log x   +    O \left( \frac{1}{x \log x}\right) +  O \left((\log x)^u \right) \ (x \to \infty) \\
 &= - sG(s) -s\log \log x  + O \left((\log x)^u\right)  \ (x \to \infty),
\end{align*}
where $G(s) = -\frac{1}{s}(A(s)-B(s))$.
By Mertens'  third theorem, we know that $G(1) = \gamma$.
\end{proof}

Note that
\begin{align*}
H(s)  & = -\frac{1}{s}\sum_p \left(\frac{1}{p} +\frac{1}{s}\log\left(1- \frac{s}{p} \right) \right)  \\
 & =  \sum_p \left(\frac{1}{2p^2} + \frac{s}{3p^3} + \frac{s^2}{4p^4} + \cdots \right) \\
 & =  \sum_{n = 0}^\infty \frac{P(n+2)}{n+2}s^n,
\end{align*}
provided that the given series converges absolutely.  In fact, for any $r \geq 0$ the sequence $ \frac{P(n+1)}{n+1}r^n$ converges monotonically to $0$ if and only if $r \leq 2$, so the radius of convergence of the series $ \sum_{n = 0}^\infty \frac{P(n+2)}{n+2}s^n$ for $s \in \CC$ is $2$, and the series converges on the entire disk $|s| \leq 2$ except at $s = 2$.

\begin{corollary}
One has the following.
\begin{enumerate}
\item For all $s \in \RR$ not equal to $0$ or a prime, one has $$G(s) = M+ sH(s).$$
\item $G(0) := \lim_{s \to 0} G(s) = M$.
\item  $H(0) := \lim_{s \to 0} H(s) = \frac{1}{2} P(2)=  G'(0) = 0.2261237100205 \ldots$.
\item One has Maclaurin series expansions
$$H(s) =  \sum_{n = 0}^\infty \frac{P(n+2)}{n+2}s^n$$
and
$$G(s) =  M+\sum_{n = 1}^\infty \frac{P(n+1)}{n+1}s^n$$
valid for all  $s \in \RR$ with $|s| \leq 2$ except $s = 2$, and both series converge for all  $s \in \CC$ with $|s| \leq 2$ except $s = 2$.
\item $\gamma = G(1) = M+H$.
\item $H =  H(1) = G(1) - G(0)$.
\item $G^{(n)}(0) = \frac{n!}{n+1}P(n+1) =  nH^{(n-1)}(0)$ for all $n \geq 1$.
\item $H^{(n)}(0) = \frac{n!}{n+2}P(n+2)$ for all $n \geq 0$.
\end{enumerate}
\end{corollary}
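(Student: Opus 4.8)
The plan is to establish assertion (1) directly from the definitions and then to deduce (2)--(8) as essentially formal consequences, using the Maclaurin expansion of $H(s)$ recorded in the discussion preceding the corollary.

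For (1) I would combine three limit identities. From (\ref{oneover}) the Meissel--Mertens constant is $M = \lim_{x\to\infty}\bigl(\sum_{p\leq x}\tfrac1p - \log\log x\bigr)$; from Proposition \ref{mertensprop}(1), $G(s) = -\lim_{x\to\infty}\bigl(\tfrac1s\sum_{p\leq x}\log(1-\tfrac sp) + \log\log x\bigr)$; and from Proposition \ref{mertensprop}(2) (taking $t<0$), equivalently from the absolute convergence of the series defining $H(s)$, one has $sH(s) = -\lim_{x\to\infty}\sum_{p\leq x}\bigl(\tfrac1p + \tfrac1s\log(1-\tfrac sp)\bigr)$. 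Adding $M$ and $sH(s)$, the partial sums $\sum_{p\leq x}\tfrac1p$ cancel, leaving
$$M + sH(s) \;=\; \lim_{x\to\infty}\Bigl(-\log\log x - \tfrac1s\sum_{p\leq x}\log\bigl(1-\tfrac sp\bigr)\Bigr) \;=\; G(s)$$
for every real $s$ that is neither $0$ nor a prime, which is (1).

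The remaining statements follow quickly. Since the series $\sum_{n\geq 0}\frac{P(n+2)}{n+2}s^n$ has radius of convergence $2$ (as noted in the discussion preceding the corollary), $H$ extends to a real-analytic function on $(-2,2)$, and via (1) so does $G = M + sH$; this is what gives meaning to $G(0)$, $G'(0)$, and $G^{(n)}(0)$. Now (2) is $G(0) = M + 0\cdot H(0) = M$; (4) follows by multiplying the $H$-series by $s$ and adding $M$, which has the same disk of convergence; evaluating the $H$-series at $0$ gives $H(0) = \tfrac12 P(2)$, and differentiating $G = M + sH$ once gives $G'(0) = H(0) = \tfrac12 P(2)$, the numerical value coming from $P(2) = \sum_p p^{-2}$, so (3) holds; termwise differentiation of the $H$- and $G$-series inside the disk of convergence yields $H^{(n)}(0) = n!\,\frac{P(n+2)}{n+2}$ for $n\geq 0$ (giving (8)) and $G^{(n)}(0) = n!\,\frac{P(n+1)}{n+1}$ for $n\geq 1$, and comparing the latter with $nH^{(n-1)}(0) = n\cdot(n-1)!\,\frac{P(n+1)}{n+1}$ proves (7); finally, (1) at $s=1$ together with $G(1) = \gamma$ (Mertens' third theorem, as in Proposition \ref{mertensprop}(1)) and $H(1) = H$ gives (5), and (6) is immediate from (5) and (2).

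I do not anticipate a real obstacle: the one place that could require care, the interchange of limit and summation hidden in the identity $sH(s) = -\lim_{x\to\infty}\sum_{p\leq x}(\cdots)$, is already supplied by Proposition \ref{mertensprop}(2) (and by the absolute convergence established there), and everything else is termwise differentiation of a power series within its radius of convergence.
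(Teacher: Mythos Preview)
Your proposal is correct and matches the paper's intended approach. The paper states this corollary with no proof, leaving it as an immediate consequence of Proposition~\ref{mertensprop} together with the Maclaurin-series discussion for $H(s)$ that precedes the corollary; your argument is precisely the natural way to fill in those details, deriving (1) by combining Proposition~\ref{mertensprop}(1)--(2) with (\ref{oneover}) and then reading off (2)--(8) from $G(s)=M+sH(s)$ and the power-series representation of $H$.
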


Note that equation $\gamma = M+H$ is a well-known relationship between the constants $\gamma$, $M$, and $H$.  By the corollary, the function $G(s)$ continuously deforms the constant $M$ to the constant $\gamma = M+H$ over the interval $[0,1]$ and extends uniquely to the analytic function $M+\sum_{n = 1}^\infty \frac{P(n+1)}{n+1}s^n$ on the closed disk $|s| \leq 2$ minus $s = 2$.  An approximation of the graph of $G(s)$ on $[-2,2)$ by the first 400 terms of its Maclaurin series, is provided in Figure \ref{graphG}.

\begin{figure}[ht!]
\centering
\includegraphics[width=100mm]{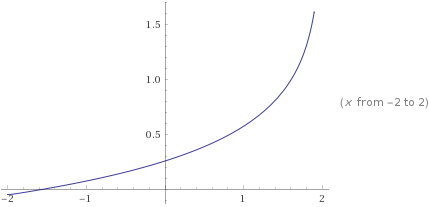}
\caption{Approximation of $G(s)$ on $[-2,2)$ by the first 400 terms of its Maclaurin series  \label{graphG}}
\end{figure}

By statements (1) and (2) of Proposition \ref{mertensprop}, for all $a > 1$ and all $s \in \RR$ not equal to $0$ or a prime, the function  $$\log \prod_{x < p \leq ax} \left(1 - \frac{s}{p} \right)^{-1/s}  =- \frac{1}{s}\sum_{x < p \leq ax} \log \left(1-\frac{s}{p}\right)$$ has the same asymptotic expansions as the function  $\sum_{x< p \leq ax} \frac{1}{p}$.  We may combine this with the results in the previous section as follows.

\begin{theorem}
Let $a > 1$.  One has the following asymptotic expansions.
\begin{enumerate}
\item $\displaystyle \ \sum_{a^x < p \leq a^{x+1}} \frac{1}{p}\, \sim \,  \sum_{n = 1}^\infty \frac{(-1)^{n-1}}{nx^{n}} \ (x \to \infty).$
\item $\displaystyle \ \sum_{a^x < p \leq a^{x+1}} \frac{1}{p}\, \sim \,  \cfrac{\frac{1}{x}}{1 \,+}\  \cfrac{\frac{1}{x}}{2 \,+}\  \cfrac{\frac{1}{x}}{3 \,+}\   \cfrac{\frac{4}{x}}{4 \,+}\  \cfrac{\frac{4}{x}}{5 \,+} \  \cfrac{\frac{9}{x}}{6 \,+}\  \cfrac{\frac{9}{x}}{7 \,+}  \ \cfrac{\frac{16}{x}}{8 \,+}  \ \cfrac{\frac{16}{x}}{9 \,+}  \ \cdots \ (x \to \infty).$
\item $\displaystyle \ \sum_{a^x < p \leq a^{x+1}} \frac{1}{p}\, \sim \,  \cfrac{\frac{1}{x}}{1 \,+}\  \cfrac{\frac{1}{x}}{2 \,+}\  \cfrac{\frac{1}{x}}{3 \,+}\   \cfrac{\frac{2}{x}}{2 \,+}\  \cfrac{\frac{2}{x}}{5 \,+} \  \cfrac{\frac{3}{x}}{2 \,+}\  \cfrac{\frac{3}{x}}{7 \,+}  \ \cfrac{\frac{4}{x}}{2 \,+}  \ \cfrac{\frac{4}{x}}{9 \,+}  \ \cdots \ (x \to \infty)$.
\item $\displaystyle \ \sum_{a^x < p \leq a^{x+1}} \frac{1}{p}\, \sim \,  \cfrac{2}{2x+1 \,-}\  \cfrac{1}{6x+3 \,-}\  \cfrac{4}{10x +5 \,-}\   \cfrac{9}{14x +7\,-}\  \cfrac{16}{18x +9 \,-} \ \cdots \ (x \to \infty).$
\item $\displaystyle \ \sum_{x < p \leq ax} \frac{1}{p}\, \sim \,  \sum_{n = 1}^\infty \frac{(-1)^{n-1}}{n(\log_a x)^{n}} \ (x \to \infty).$
\item $\displaystyle \sum_{x< p \leq ax} \frac{1}{p}\, \sim \,  \cfrac{\frac{1}{\log_a x}}{1 \,+}\  \cfrac{\frac{1}{\log_a x}}{2 \,+}\  \cfrac{\frac{1}{\log_a x}}{3 \,+}\   \cfrac{\frac{4}{\log_a x}}{4 \,+}\  \cfrac{\frac{4}{\log_a x}}{5 \,+} \  \cfrac{\frac{9}{\log_a x}}{6 \,+}\  \cfrac{\frac{9}{\log_a x}}{7 \,+}  \ \cfrac{\frac{16}{\log_a x}}{8 \,+}  \ \cfrac{\frac{16}{\log_a x}}{9 \,+}  \ \cdots \ (x \to \infty)$
\item $\displaystyle \sum_{x< p \leq ax} \frac{1}{p}\, \sim \,  \cfrac{\frac{1}{\log_a x}}{1 \,+}\  \cfrac{\frac{1}{\log_a x}}{2 \,+}\  \cfrac{\frac{1}{\log_a x}}{3 \,+}\   \cfrac{\frac{2}{\log_a x}}{2 \,+}\  \cfrac{\frac{2}{\log_a x}}{5 \,+} \  \cfrac{\frac{3}{\log_a x}}{2 \,+}\  \cfrac{\frac{3}{\log_a x}}{7 \,+}  \ \cfrac{\frac{4}{\log_a x}}{2 \,+}  \ \cfrac{\frac{4}{\log_a x}}{9 \,+}  \ \cdots \ (x \to \infty).$
\item $\displaystyle \sum_{x< p \leq ax} \frac{1}{p}\, \sim \,  \cfrac{2}{2\log_a x+1 \,-}\  \cfrac{1}{6\log_a x+3 \,-}\  \cfrac{4}{10\log_a x +5 \,-}\   \cfrac{9}{14\log_a x +7\,-}\ \cdots \ (x \to \infty).$
\end{enumerate}
Let $s \in \RR$ be nonzero and not equal to a prime.  Then the asymptotic continued fraction expansions in (1)--(4) also hold for the functions $\log(1 +\frac{1}{x})$ and $$\log \prod_{a^x < p \leq a^{x+1}} \left( 1-\frac{s}{p}\right)^{-1/s} = -\frac{1}{s}\sum_{a^x < p \leq a^{x+1}} \log \left( 1-\frac{s}{p}\right),$$
while the continued fraction expansions in (5)--(8) also hold for the functions  $\log(1 +\frac{1}{\log_a x}) = \log \log(ax)-
\log \log x$ and $$\log \prod_{x < p \leq ax} \left( 1-\frac{s}{p}\right)^{-1/s} = -\frac{1}{s}\sum_{x < p \leq ax} \log \left( 1-\frac{s}{p}\right).$$ 
\end{theorem}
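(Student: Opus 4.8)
\medskip
\noindent\textbf{Proof strategy.} The plan is to observe that, up to the change of variable $x \leftrightarrow \log_a x$, every function named in the theorem carries the same asymptotic power series, namely the one $\sum_{n = 1}^\infty \frac{(-1)^{n-1}}{n x^n}$ attached to $\log(1+1/x)$, and then to transfer to all of them the continued fraction identities already recorded for $\log(1+1/z)$ in Section 2 and in Section 3.2. This transfer is legitimate because an asymptotic power series at $\infty$ entirely determines whether a prescribed Jacobi or Stieltjes continued fraction is an asymptotic expansion of the function: the approximants appearing in (1)--(8) are fixed rational functions, and ``$f \sim$ [continued fraction]'' means exactly that those approximants match the asymptotic power series of $f$ to the prescribed orders, by the results of Section 1.2 and \cite{ell}. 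So it suffices to identify the relevant asymptotic power series in each case.

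I would first treat the two explicit logarithm functions by substitution into the known expansions of $\log(1+1/z)$. For $\log(1+1/x)$: item (1) is the Example's expansion $\log(1+1/z) \sim \sum_{n = 1}^\infty \frac{(-1)^{n-1}}{n z^n}$ evaluated at $z = x$; items (2)--(3) are (\ref{logas}) and the variant of (\ref{logas1}) recorded in Section 3.2, both at $z = x$; and item (4) is (\ref{uni}) at $z = 2x+1$, using that $\log\frac{(2x+1)+1}{(2x+1)-1} = \log(1+1/x)$. For $\log(1 + 1/\log_a x) = \log\log(ax) - \log\log x$: the same four expansions, now at $z = \log_a x$ (and at $z = 2\log_a x + 1$ for the analogue of (4)), yield (5)--(8). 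Next, for the prime sums themselves, (2)--(4) and (6)--(8) are verbatim the asymptotic continued fraction expansions already proved in Section 3.2 via the uniform distributions on $[-1,0]$ and on $[-1,1]$ together with (\ref{logas1})--(\ref{uni}), and (1) and (5) follow because, as shown in the proof of the proposition of Section 3.2 on $\sum_{x < p \leq ax} 1/p$, that function has the same asymptotic expansion as $\log\log(ax) - \log\log x = \log(1 + 1/\log_a x)$, hence (replacing $x$ by $a^x$) $\sum_{a^x < p \leq a^{x+1}} 1/p$ has the same asymptotic expansion as $\log(1 + 1/x)$.

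The substantive case is that of the products. I would write $\log \prod_{a^x < p \leq a^{x+1}} (1-s/p)^{-1/s} = -\frac1s \sum_{a^x < p \leq a^{x+1}} \log(1-s/p)$, which is meaningful once $a^x > |s|$, so that every prime in the range exceeds $|s|$, and then invoke Proposition \ref{mertensprop}(1) in the form $-\frac1s \sum_{p \leq y} \log(1-s/p) = G(s) + \log\log y + o((\log y)^t)$, valid for all $t \in \RR$. Evaluating this at $y = a^{x+1}$ and at $y = a^x$ and subtracting, the constants $G(s)$ cancel, the main terms telescope to $\log((x+1)\log a) - \log(x\log a) = \log(1+1/x)$, and the error collapses to $o(x^t)$ for every $t$; the same computation at $y = ax$ and $y = x$ gives $\log\log(ax) - \log\log x + o((\log x)^t) = \log(1 + 1/\log_a x) + o((\log x)^t)$ for every $t$. (Alternatively, Proposition \ref{mertensprop}(2), after the analogous telescoping, shows directly that $\log\prod(1-s/p)^{-1/s}$ and $\sum 1/p$ over the same prime range differ by $o((\log x)^t)$ for all $t$.) Thus the products share the asymptotic power series of $\log(1+1/x)$, respectively $\log(1 + 1/\log_a x)$, so by the first paragraph all of (1)--(8) hold for them as well.

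The only point requiring genuine care is this last error estimate: one must verify that the $o((\log y)^t)$ term in Proposition \ref{mertensprop} --- precisely the saving furnished by the prime number theorem with error term --- survives the substitutions $y = a^{x+1}, a^x$ (respectively $y = ax, x$) as a quantity that is $o$ of every power of $1/x$ (respectively $1/\log_a x$), so that the asymptotic power series, and with it all of the continued fractions (1)--(8), is left unchanged. Everything else reduces to quoting the expansions already established in Sections 2 and 3.2 and making the appropriate substitution.
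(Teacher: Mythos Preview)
Your proposal is correct and follows essentially the same approach as the paper: the theorem is presented there as a direct combination of the results of Section 3.2 (for the prime sums and the logarithm functions) with Proposition \ref{mertensprop}(1)--(2) (which, via telescoping, shows the product functions share the same asymptotic power series). Your write-up spells out the substitutions and the error-term check more explicitly than the paper does, but the logical structure is identical.
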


We may rewrite the asymptotic expansion (\ref{uni}) as the asymptotic Jacobi continued fraction expansion
\begin{align}\label{uni2}
\log \left(\frac{z+1}{z-1}\right) =  \cfrac{2}{z \,-}\  \cfrac{\sfrac{1}{3}}{z \,-}\  \cfrac{\sfrac{4}{15}}{z \,-}\   \cfrac{\sfrac{9}{35}}{z \,-}\  \cfrac{\sfrac{16}{63}}{z  \,-} \  \cfrac{\sfrac{25}{99}}{z  \,-}\  \cfrac{\sfrac{36}{143}}{z  \,-} \cdots, \quad z \in \CC\backslash [-1,1],
\end{align}
so, substituting $z = 2x+1$,  the asymptotic expansion (4) of the theorem can be rewritten as the asymptotic Jacobi continued fraction expansion
$$\sum_{a^x < p \leq a^{x+1}} \frac{1}{p}\, \sim \,  \cfrac{1}{x+\sfrac{1}{2} \,-}\  \cfrac{\sfrac{1}{4\cdot 3}}{x+\sfrac{1}{2} \,-}\  \cfrac{\sfrac{4}{4\cdot 15}}{x+\sfrac{1}{2} \,-}\   \cfrac{\sfrac{9}{4\cdot 35}}{x+\sfrac{1}{2}\,-}\  \cfrac{\sfrac{16}{4\cdot 63}}{x+\sfrac{1}{2} \,-} \ \cdots \ (x \to \infty).$$
It is known that the denominator in the $n$th approximant of the Jacobi continued fraction in (\ref{uni2}) is the $n$th Legendre polynomial $P_n(z)$.  It follows that the denominator in the $n$th approximant of the continued fraction in the expansion of $\sum_{a^x < p \leq a^{x+1}} \frac{1}{p}$ above is the integer polynomial $\widehat{P}_n(x) = P_n(2x+1)$.  It is known that these polynomials are given explicitly by
$$\widehat{P}_n(x) = P_n(2x+1) = \sum _{k=0}^{n}{\binom {n}{k}}{\binom {n+k}{k}}x^{k}$$
for all $n$.  Now, applying \cite[Theorem 2.4]{ell}, we obtain the following.

\begin{corollary}
Let $a > 1$,  and let $s \in \RR$ be nonzero and not equal to a prime.  Let $f(x)$ denote any of the three functions $\log(1+1/x)$, $\sum_{a^x < p \leq a^{x+1}} \frac{1}{p}$, and $\log \prod_{a^x < p \leq a^{x+1}} (1 -s/p)^{-1/s}$.  Then $f(x)$ has the asymptotic Jacobi continued fraction expansion
\begin{align*}
 f(x) \, \sim  \, \cfrac{2}{2x+1 \,-}\  \cfrac{1}{6x+3 \,-}\  \cfrac{4}{10x +5 \,-}\   \cfrac{9}{14x +7\,-}\  \cfrac{16}{18x +9 \,-} \ \cdots \ (x \to \infty),
\end{align*}
The best rational appoximations of the function $f(x)$ are precisely the approximants $w_n(x)$ of the given continued fraction for $n \geq 0$, which converge to $\log (1+1/x)$ for all $x\in \CC\backslash [-1,0]$ as $n \to \infty$.   Moreover, one has
 $$f(x)- w_n(x) \sim \frac{c_n}{x^{2n+1}}  \ (x \to \infty)$$
for all $n \geq 0$,
where $c_0 = 1$ and
$$c_n = \frac{1}{2^{2n}} \prod_{k = 1}^n \frac{k^2}{4k^2-1} = \frac{1}{(2n+2){2n+1 \choose n}{2n-1 \choose n}}$$ for all $n \geq 1$.
Furthermore, one has
$$w_n(x) = \sum_{k = 1}^n \frac{c_{k-1}}{P_k(2x+1)P_{k-1}(2x+1)}$$
for all $n \geq 0$, where $P_n(x)$ denotes the $n$th Legendre polynomial and
$$P_n(2x+1) = \sum _{k=0}^{n}{\binom {n}{k}}{\binom {n+k}{k}}x^{k}$$
for all $n \geq 0$.
\end{corollary}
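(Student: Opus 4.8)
The plan is to combine the theorem immediately preceding this corollary (which establishes that each of the three functions $f(x)$ shares the asymptotic continued fraction expansion $\cfrac{2}{2x+1-}\cfrac{1}{6x+3-}\cfrac{4}{10x+5-}\cdots$) with Proposition~\ref{aprop} and the structural results of \cite[Theorem~2.4]{ell}. First I would observe that the expansion in (4) of the theorem is, after the substitution $z = 2x+1$, exactly the Jacobi continued fraction expansion~(\ref{uni2}) of the Stieltjes transform $\mathcal S_\mu(z)$ of the uniform distribution $\mu$ on $[-1,1]$; since $\mu$ is a finite measure with infinite compact support, the comments following Proposition~\ref{aprop} apply verbatim, so $\mathcal S_\mu$ is analytic at $\infty$ and $f(x) = \mathcal S_\mu(2x+1) + O(x^{-k})$ for all $k$. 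Hence the hypotheses of Proposition~\ref{aprop} (equivalently of \cite[Theorem~2.4]{ell}) hold for $f(x)$ with the moment sequence of $\mu$, and the conclusion that the approximants $w_n(x)$ are precisely the best rational approximations of $f(x)$, and that they converge to $\log(1+1/x)$ on $\CC\setminus[-1,0]$, is then immediate from the cited theorem.

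The remaining content is the explicit computation of the error constants $c_n$ and the partial-fraction formula for $w_n(x)$. For the $c_n$: by \cite[Theorem~2.4]{ell}, the leading term of $f(x) - w_n(x)$ is governed by the product $a_1 a_2 \cdots a_{2n+1}$ of the Stieltjes partial numerators (equivalently, of the Jacobi numerators), so I would extract the $a_k$ from the Jacobi form~(\ref{uni2}) — namely $a_1 = 2$ and $a_{k} = \frac{(k-1)^2}{(2k-3)(2k-1)}$ for $k \geq 2$ — and telescope the product. The substitution $z = 2x+1$ contributes the factor $2^{-(2n+?)}$ from rescaling the continued fraction variable; tracking this bookkeeping carefully gives $c_n = \frac{1}{2^{2n}}\prod_{k=1}^n \frac{k^2}{4k^2-1}$, and the binomial closed form follows by writing $4k^2 - 1 = (2k-1)(2k+1)$, telescoping the odd-number products into double factorials, and converting to binomial coefficients. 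For the formula $w_n(x) = \sum_{k=1}^n \frac{c_{k-1}}{P_k(2x+1)P_{k-1}(2x+1)}$: the denominators of the approximants of~(\ref{uni2}) are the Legendre polynomials $P_n(z)$ (a classical fact, already invoked in the excerpt), so the denominators of the approximants of the $f(x)$-continued fraction are $\widehat P_n(x) = P_n(2x+1)$; the standard identity $w_n - w_{n-1} = \frac{(-1)^{n-1} a_1\cdots a_n}{q_n q_{n-1}}$ for the $n$th and $(n-1)$st convergents (here all the minus signs in the continued fraction make the sign positive) telescopes $w_n = \sum_{k=1}^n (w_k - w_{k-1})$ into the claimed sum, with the numerator of the $k$th term being precisely $c_{k-1}$ once one matches it against the already-computed error constant.

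The main obstacle I anticipate is purely the bookkeeping of normalization constants: the continued fraction in the corollary is written in the non-normalized Stieltjes form $\cfrac{2}{2x+1-}\cfrac{1}{6x+3-}\cdots$ rather than the normalized Jacobi form~(\ref{uni2}), and one must be careful about (i) how equivalence transformations of continued fractions rescale the partial numerators, (ii) the factor of $2$ introduced by $z \mapsto 2x+1$ in each linear term, and (iii) matching the indexing convention of \cite[Theorem~2.4]{ell} (whose error estimate is stated in terms of $a_1\cdots a_{2n+1}$ or the Jacobi $a_k$'s) against the Stieltjes-form numerators $1,1,4,9,16,\dots$. Once the dictionary between the three forms (non-normalized Stieltjes in the corollary, normalized Jacobi in~(\ref{uni2}), and the abstract form in \cite{ell}) is fixed, the verification that $\prod$ of the relevant numerators equals $c_n$ and that the telescoped convergents match is a routine induction. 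A secondary, genuinely classical point — that the denominators of the convergents of~(\ref{uni2}) are the Legendre polynomials and that $P_n(2x+1) = \sum_k \binom nk\binom{n+k}k x^k$ (the shifted Legendre polynomials) — can be cited rather than reproved, as the excerpt already does.
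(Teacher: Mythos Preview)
Your proposal is correct and follows essentially the same route as the paper: the paper's entire argument is the sentence ``Now, applying \cite[Theorem 2.4]{ell}, we obtain the following,'' together with the preceding paragraph identifying the substitution $z=2x+1$ into the Stieltjes transform of the uniform distribution on $[-1,1]$ and citing the classical fact that the convergent denominators are the Legendre polynomials. Your write-up supplies more explicit detail on the computation of the $c_n$ via the product of partial numerators and on the telescoping identity $w_n-w_{n-1} = c_{k-1}/\widehat P_k\widehat P_{k-1}$, but this is exactly the content packaged inside \cite[Theorem 2.4]{ell}, so the approach is the same.
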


\begin{remark}
From \cite[{[1.14]}]{akh}, one deduces that the numerator $\widehat{R}_n(x)$ of $w_n(x) = \frac{ \widehat{R}_n(x)}{\widehat{P}_n(x)}$ is
$$\widehat{R}_n(x) =  \sum_{k = 1}^{n} a_{n,k} x^{k-1},$$
where $$a_{n,k} =  \sum_{j = k}^n \frac{(-1)^{j-k}}{j-k+1} {n \choose j}{n+j \choose j}   = {n+k \choose 2k}{2k \choose k} {}_{4}F_{3}(1,1,k-n, n+k+1; 2, k+1, k+1; 1)$$
for all $n \geq 1$ and $1 \leq k \leq n$.  
\end{remark}

\subsection{$\pi(ax)-\pi(bx)$ for $a > b > 1$}

Let $s<t$ be real numbers.  Consider the measure $\mu$ on $[s,t]$ of density $e^{-u} \, du$.  The $n$th moment of $\mu$ is
$$m_n(\mu) = \int_{s}^t u^n e^{-u} \, du = \int_{s}^\infty u^n e^{-u} \, du -\int_{t}^\infty u^n e^{-u} \, du = e^{-s} r_n(s)-e^{-t} r_n(t),$$
where $r_n(X) = \sum_{k = 1}^n \frac{n!}{k!} X^k \in \ZZ[X]$.  Moreover, one has the asymptotic expansion
$$\frac{\li(e^{x-s})-\li(e^{x-t})}{e^x} \sim \sum_{n = 0}^\infty \frac{m_n(\mu)}{x^{n+1}} \ (x \to \infty),$$
and the same expansion holds for the function $\frac{\pi(e^{x-s})-\pi(e^{x-t})}{e^x}$.
The Stieltjes transform of $\mu$ is
$${\mathcal S}_\mu(z) = -e^{-z}\left(E_1(-z+s)-E_1(-z+t)\right), \quad z \in \CC\backslash[s,t],$$
and one has
$${\mathcal S}_\mu(x) = -e^{-x}\left(E_1(-x+s)-E_1(-x+t)\right)  = \frac{\li(e^{x-s})-\li(e^{x-t})}{e^x}, \quad x \in \RR\backslash[s,t].$$
It follows that two (convergent) continued fraction expansions of ${\mathcal S}_\mu(x)$ provide  asymptotic continued fraction expansions of both $\frac{\li(e^{x-s})-\li(e^{x-t})}{e^x}$ and $\frac{\pi(e^{x-s})-\pi(e^{x-t})}{e^x}$ as $x \to \infty$.  These take the form
$$z{\mathcal S}_\mu(z) = c_0(s,t)+ \cfrac{\frac{c_1(s,t)}{z}}{1\ +} \ \cfrac{\frac{c_2(s,t)}{z}}{1\ +} \ \cfrac{\frac{c_3(s,t)}{z}}{1\ +} \  \cdots, \quad z \in \CC\backslash[s,t]$$
and
$$z{\mathcal S}_\mu(z) = a_0(s,t)+ \cfrac{b_1(s,t)}{z + a_1(s,t)\ +} \ \cfrac{b_2(s,t)}{z+a_2(s,t)\ +} \ \cfrac{b_3(s,t)}{z+a_3(s,t)\ +} \  \cdots, \quad z \in \CC\backslash[s,t],$$
where using the qd-algorithm \cite[Section 6.1]{cuyt} we compute
$$c_0(s,t) = a_0(s,t) = e^{-s}-e^{-t},$$
$$c_1(s,t) = b_1(s,t) = e^{-s}(1+s)-e^{-t}(1+t),$$
$$c_2(s,t) = a_1(s,t) =  \frac{e^{-s}(2+2s+s^2)-e^{-t}(2+2t+t^2)}{e^{-s}(1+s)-e^{-t}(1+t)}$$
$$c_3(s,t) = \frac{e^{-2s}(2+4s+s^2)+e^{-2t}(2+4t+t^2)-e^{-s-t}g(s,t)}{(e^{-s}(1+s)-e^{-t}(1+t))(e^{-s}(2+2s+s^2)-e^{-t}(2+2t+t^2))},$$
and
$$b_2(s,t) = c_2(s,t)c_3(s,t),$$
where $g(s,t)$ is the symmetric polynomial
$$g(s,t) = s^3t+st^2-2s^2t^2+s^3+t^3-s^2t-st^2-s^2-t^2+4st+4s+4t+4.$$
It follows that one has asymptotic expansions of the form
$$\frac{\pi(e^{x-s})-\pi(e^{x-t})}{e^x/x} \sim e^{-s}-e^{-t}+ \cfrac{e^{-s}(1+s)-e^{-t}(1+t)}{x\ +} \ \cfrac{c_2(s,t)}{1\ +} \ \cfrac{c_3(s,t)}{x\ +} \  \cdots  \ (x \to \infty)$$
and
$$\frac{\pi(e^{x-s})-\pi(e^{x-t})}{e^x/x} = e^{-s}-e^{-t}+ \cfrac{e^{-s}(1+s)-e^{-t}(1+t)}{x +  \frac{e^{-s}(2+2s+s^2)-e^{-t}(2+2t+t^2)}{e^{-s}(1+s)-e^{-t}(1+t)}\ +} \ \cfrac{b_2(s,t)}{x+a_2(s,t)\ +} \ \cfrac{b_3(s,t)}{x+a_3(s,t)\ +} \  \cdots \ (x \to \infty),$$
the latter of which gives explicitly the first two best rational approximations of $\frac{\pi(e^{x-s})-\pi(e^{x-t})}{e^x/x}$, where the error in the first approximation $e^{-s}-e^{-t}$  is asymptotic to $\frac{b_1(s,t)}{x}$, while the error in the second approximation is asymptotic to $\frac{b_1(s,t)b_2(s,t)}{x^3}$.

Under the obvious transformation, for $a > b > 0$ the above yields
$$\frac{\pi(ax)-\pi(bx)}{x/\log x} \sim a-b+ \cfrac{a(1-\log a)-b(1-\log b)}{\log x\ +} \ \cfrac{c_2(-\log a,-\log b)}{1\ +} \ \cfrac{c_3(-\log a,-\log b)}{\log x\ +} \  \cdots  \ (x \to \infty)$$
and
$$\frac{\pi(ax)-\pi(bx)}{x/\log x} \sim a-b+ \cfrac{a(1-\log a)-b(1-\log b)}{\log x + a_1(-\log a,-\log b)\ +} \ \cfrac{b_2(-\log a,-\log b)}{\log x+a_2(-\log a,-\log b)\ +} \  \cdots \ (x \to \infty),$$
where $c_2$, $c_3$, $a_1$, and $b_2$ are given explicitly as above. (Here, of course, $\pi(ax)-\pi(bx)$ is the number of primes $p$ such that $bx < p \leq ax$.) It also follows that the first two best approximations of $\frac{\pi(ax)-\pi(bx)}{x/\log x}$ that are rational functions of $\log x$ are $a-b$ and
$$a-b+ \cfrac{a(1-\log a)-b(1-\log b)}{\log x + \frac{a(2-2\log a+(\log a)^2)-b(2-2\log b+(\log b)^2)}{a(1-\log a)-b(1-\log b)}}.$$    For example, for $a = 2$ and $b = 1$, the second approximation is
$$\frac{\pi(2x)-\pi(x)}{x/\log x} \sim 1 - \frac{\log 4 -1}{\log x + \frac{2(\log 2 -1)^2}{\log 4 -1}} \approx   1 - \frac{0.38629436111989}{\log x + 0.48749690534099}.$$

For $a = e$ and $b = 1$ (so $s = -1$, $t = 0$, $r_n(s) = D_n$ (the number of derangements of an $n$-element set), and $r_n(t) = n!$) we computed some additional terms:
$$\frac{\pi(ex)-\pi(x)}{x/\log x} \sim e-1+ \cfrac{-1}{\log x\ +} \ \cfrac{e-2}{1\ +} \ \cfrac{-\frac{e^2-2e-2}{e-2}}{\log x\ +} \  \cfrac{-\frac{5e^2-18e+12}{(e-2)(e^2-2e-2)}}{1\ +}  \  \cfrac{\frac{(e-2)(16e^3-85e^2+104e+24)}{(e^2-2e-2)(5e^2-18e+12)}}{\log x\ +}  \  \cdots \ (x \to \infty)$$
and
$$\frac{\pi(ex)-\pi(x)}{x/\log x} \sim e-1+ \cfrac{-1}{\log x + e-2\ +} \ \cfrac{e^2-2e-2}{\log x+ \frac{8-e+2e^2-e^3}{e^2-2e-2} \ +}  \  \cfrac{-\frac{16e^3-85e^2+104e+24}{(e^2-2e-2)^2}}{\log x + a_3(-1,0) + \ } \ \cdots \ (x \to \infty).$$
Thus
$$\frac{\pi(ex)-\pi(x)}{x/\log x} -(e-1) \sim \cfrac{-1}{\log x} \ (x \to \infty),$$
$$\frac{\pi(ex)-\pi(x)}{x/\log x} -\left(e-1+ \cfrac{-1}{\log x + e-2}\right) \sim \cfrac{-1(2+2e-e^2)}{(\log x)^3} \ (x \to \infty),$$
and
$$\frac{\pi(ex)-\pi(x)}{x/\log x} -\left(e-1+ \cfrac{-1}{\log x + e-2 + \frac{e^2-2e-2}{\log x+ \frac{8-e+2e^2-e^3}{e^2-2e-2}} } \right) \sim \cfrac{ \frac{16e^3-85e^2+104e+24}{e^2-2e-2}}{(\log x)^5} \ (x \to \infty).$$
In particular, the first three best approximations of $\frac{\pi(ex)-\pi(x)}{x/\log x}$ that are rational functions of $\log x$ are $e-1$, $e-1+\frac{-1}{\log x + e-2}$, and $e-1+ \frac{-1}{\log x + e-2  + \frac{e^2-2e-2}{\log x+ \frac{8-e+2e^2-e^3}{e^2-2e-2}} }.$

\end{document}